\documentclass[a4paper, 11pt, reqno]{amsart}
\usepackage{graphicx,color}
\usepackage{latexsym}
\usepackage{graphicx}
\usepackage[english]{babel}
\usepackage[utf8]{inputenc}
\usepackage{wrapfig}
\usepackage{amsthm,amsmath, amssymb,amsfonts,esint}
\usepackage{setspace}
\usepackage{hyperref}%para ir de uma referência a sua localização o texto

 \usepackage{ esint }
\usepackage{graphicx}%
\usepackage{multirow}%
\usepackage{amsmath,amssymb,amsfonts}%
\usepackage{amsthm}%
\usepackage{mathrsfs}%
\usepackage[title]{appendix}%
\usepackage{xcolor}%
\usepackage{textcomp}%
\usepackage{manyfoot}%
\usepackage{booktabs}%
\usepackage{algorithm}%
\usepackage{algorithmicx}%
\usepackage{algpseudocode}%
\usepackage{listings}%
\newtheorem{theorem}{Theorem}[section]

\newtheorem{corolary}{Corollary}[section]
\newtheorem{proposition}[theorem]{Proposition}
\newtheorem{lemma}[theorem]{Lemma}
\newtheorem{definition}[theorem]{Definition}
\newtheorem{corollary}[theorem]{Corollary}
\newtheorem{rem}[theorem]{Remark}

\title{Band Width Estimates and Rigidity of Manifolds with Negative Curvature}
\author[T.~Cruz]{Tiarlos Cruz}
\address{Institute of Mathematics, Federal University of Alagoas, 57072-970, 
	\linebreak\indent Maceió-AL, Brazil}
\email{\href{mailto: cicero.cruz@im.ufal.br}{cicero.cruz@im.ufal.br}}

\subjclass[2020]{53A10, 53C24, 53C21}

\keywords{Rigidity, area-minimizing surfaces, $\mu$-bubbles,  width, negative curvature}

\begin{document}
\begin{abstract}
We establish optimal Lipschitz lower bounds for proper smooth functions on three-dimensional Riemannian manifolds with Ricci curvature bounded below by negative constants, yielding a new family of width estimates for Riemannian bands using Gromov's $\mu$-bubble method, together with rigidity statements characterizing the equality case.  One of the novelties of our approach lies in its ability to handle higher-genus boundary components, revealing a precise interplay between the width, the area of boundary surfaces, and the underlying topology.  Finally, for a complete noncompact three-manifold $M$ with bounded geometry and scalar curvature $R_g\ge -6$, whose $H_2(M,\mathbb{Z})$ contains no spherical or toroidal classes, we prove a sharp lower bound for the boundary area. In the equality case, the manifold is shown to be isometric to an infinite hyperbolic band.
\end{abstract}
\maketitle

\section{Introduction}

A recurring theme in Riemannian geometry is understanding how lower curvature bounds constrain the global size and geometry of a manifold. This interplay manifests differently across the curvature spectrum.  Positive curvature typically forces compactness and diameter control under positive Ricci curvature assumptions.  Negative curvature, by contrast, allows substantially greater flexibility: complete manifolds with negative sectional curvature may have infinite volume, complicated geometry at infinity, and no \emph{a priori} diameter bounds.
This dichotomy raises the natural question: 
given a lower bound on the curvature, how does it constrain the distance between boundary components? And if negative curvature alone cannot bound the size of the manifold, what additional structure does it require?

To formalize the problem, recall that a  Riemannian band is a compact connected manifold $M$ with boundary decomposed into two nonempty unions of connected components $\partial_{-}M$ and $\partial_{+}M$. The \emph{width} of $(M,g)$ is defined as
$$
\text { width}(M, g)=\operatorname{dist}_g\left(\partial_{+} M, \partial_{-} M\right), 
$$
where $\operatorname{dist}_g$ is the distance on $M$ with respect to the metric $g.$

In the positive curvature setting, the answer to the above question is by now well understood, thanks largely to Gromov's $\mu$-bubble technique. Using the minimal hypersurface techniques of Schoen-Yau \cite{SY}, together with a symmetrization argument, Gromov \cite{G2} proved that if $\mathbb{T}^2 \times [-1,1]$ carries a metric with scalar curvature $R_g \geq 6$, then
$
\operatorname{width}(M,g) \leq 2\pi/3.
$
It is remarkable that scalar curvature alone can impose such a bound on the distance, since distance estimates typically require stronger curvature assumptions. 
We also mention the works of Zeidler \cite[Theorem 1.4]{Z} and Cecchini \cite[Theorem D]{C}, who established higher-dimensional analogues using Dirac operators.

Gromov’s problem can also be formulated in terms of sectional curvature, as it is motivated by the case where $\mathbb{T}^{2}\times[-1,1]$ admits an isometric immersion in $\mathbb{S}^3$. More recently, resolving a conjecture of Gromov originally formulated under the stronger assumption of sectional curvature bounds, Zhu \cite{Zhu2} proved that if $(M^3,g)$ is a smooth overtorical band with Ricci curvature $\operatorname{Ric}_g \geq 2$, then the width of $M$ is bounded above by $\pi/2$. He also established a rigidity statement: in the case of equality, $M$ is isometric to a quotient of $\mathbb{R}^2 \times (-\varepsilon,\varepsilon)$ endowed with a  doubly warped product metric.

More recently, attention has turned to the negative curvature regime, where much less is known about width estimates. In contrast to the positive curvature case, negative curvature alone does not impose topological obstructions: every compact manifold of dimension at least three admits a metric of negative scalar curvature by the works of Aubin \cite{A}, Blank–Kalka \cite{BK}, and Kazdan–Warner \cite{KW}, while Gao–Yau \cite{GY} and Lohkamp \cite{L} established the corresponding result in the negative Ricci curvature setting.  Moreover, Cruz–Vitório \cite{CV} showed that every compact manifold admits a metric with negative scalar curvature and minimal boundary.

The following result by Gromov \cite[Section 4]{G3} provides a first width estimate in the negative scalar curvature setting (stated here in dimension three).

\begin{theorem}[Gromov]\label{refgromov} Let $M=\mathbb{T}^{2}\times[0,1]$ be a $3$-dimensional Riemannian manifold endowed with a metric $g$ satisfying  $R_g\geq-6$ and assume that the mean curvature of $\partial_+M=\mathbb{T}^{2}\times\{1\}$  satisfies $$H_g^{\partial_+M} \ge2\coth(\pi/2),$$
where $H^{\partial_+M}_g$ denotes the mean curvature with respect to the outward-pointing unit normal. 
Then 
$$
\text { width}(M, g)\leq \frac{\pi}{3}.
$$
\end{theorem}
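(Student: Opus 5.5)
The plan is Gromov's $\mu$-bubble argument with a carefully chosen weight function. Argue by contradiction: suppose $\operatorname{width}(M,g)>\pi/3$. Choose $\varepsilon>0$ small and a smooth $1$-Lipschitz function $\rho$ on $M$, approximating the distance to $\partial_-M$ and shifted so that $\rho=0$ near $\partial_-M$ and $\rho\ge \pi/3+\varepsilon$ on $\partial_+M$. Along the range of $\rho$ we place the model weight
\[
h(t)=-2\coth\!\Big(\tfrac{3t}{2}\Big)\quad\text{(suitably reparametrized)},
\]
which solves the one-dimensional comparison ODE. We choose it so that $h\to-\infty$ at the $\partial_-M$ end, while at the $\partial_+M$ end its value is governed by $2\coth(\pi/2)$ once the scale $3/2\cdot(\pi/3)=\pi/2$ is reached. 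The requirement is that $\eta=h\circ\rho$ satisfies
\[
\tfrac{R_g}{2}+\tfrac{3}{4}\eta^2-|\nabla\eta| \;>\; -3+\tfrac34 h^2-|h'| \;\ge\; 0 .
\]
A check with $h=-2\coth(3t/2)$ gives $\tfrac34 h^2+h'=3$, so we have equality and perturb slightly using the extra $\varepsilon$ room. The weight must also have the right barrier behaviour: near $\partial_-M$ it blows up, and near $\partial_+M$ we need $H_g^{\partial_+M}>\eta$, which is exactly where the hypothesis $H_g^{\partial_+M}\ge 2\coth(\pi/2)$ enters, with strictness coming from $\varepsilon$.

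Next, minimize the $\mu$-bubble functional
\[
\mathcal A(\Omega)=\mathcal H^2(\partial^*\Omega)-\int_M(\chi_\Omega-\chi_{\Omega_0})\,\eta\,d\mathcal H^3
\]
among Caccioppoli sets containing a neighborhood of $\partial_-M$. The barrier conditions keep minimizers away from both boundary components. Regularity in dimension $3$ gives a smooth minimizer $\Sigma=\partial\Omega$, whose mean curvature is $H=\eta$. Because $\Sigma$ separates $\partial_-M$ from $\partial_+M$ in $\mathbb T^2\times[0,1]$, some component $\Sigma_0$ has nonzero degree onto the $\mathbb T^2$ factor. Hence $\Sigma_0$ cannot be a sphere, and its genus is at least $1$.

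Then use the stability inequality for $\mu$-bubbles:
\[
\int_{\Sigma_0}|\nabla\varphi|^2-\tfrac12\big(R_g-R_\Sigma+|A|^2+H^2\big)\varphi^2-\langle\nabla\eta,\nu\rangle\varphi^2\ \ge 0.
\]
Apply $|A|^2\ge H^2/2$ and $H=\eta$, so that the potential is at least
\[
\tfrac12R_g+\tfrac34\eta^2-|\nabla\eta|-\tfrac12R_\Sigma .
\]
The previous step makes $\tfrac12R_g+\tfrac34\eta^2-|\nabla\eta|$ strictly positive. Taking $\varphi=1$ then gives $\int_{\Sigma_0}R_\Sigma>0$, i.e. $4\pi\chi(\Sigma_0)>0$, contradicting $\mathrm{genus}(\Sigma_0)\ge1$. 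This proves $\operatorname{width}(M,g)\le\pi/3$.

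The main obstacle is the first step: building $\eta$ with exactly the right constants. It must simultaneously satisfy the Riccati differential inequality, blow up at $\partial_-M$, and stay strictly below the boundary mean curvature at $\partial_+M$. All of this has to be arranged using only the slack $\varepsilon$ from the width assumption, while smoothing the nonsmooth distance function; this is where the sharp value $\pi/3$ together with $\coth(\pi/2)$ is forced. The existence and regularity of the minimizer I take from the standard $\mu$-bubble theory.
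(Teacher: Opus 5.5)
Your overall scheme is the standard $\mu$-bubble argument, and the stability, Gauss--Bonnet and degree steps are fine. However, the barrier set-up, which is the step you yourself flag, has a sign error that breaks the argument as written.

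\textbf{The barrier at $\partial_-M$ points the wrong way.} Your functional is $\mathcal A(\Omega)=\mathcal H^2(\partial^*\Omega)-\int_M(\chi_\Omega-\chi_{\Omega_0})\eta$, with $\Omega$ containing a neighbourhood of $\partial_-M$. Its first variation is $\int(H-\eta)\varphi$, where $H$ is taken along the normal pointing out of $\Omega$. In this set-up the end contained in $\Omega$ requires $\eta\to+\infty$.

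With your choice $\eta=-2\coth(\tfrac32\rho)\approx-\tfrac{4}{3\rho}$, which is not integrable at $\partial_-M$, the competitors $\{\rho<\delta\}$ satisfy $\mathcal A\to-\infty$ as $\delta\to0$. So there is no minimizer: the bubble collapses onto $\partial_-M$.

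The same slip shows at the other end. If $\eta<0$ near $\partial_+M$, then $H^{\partial_+M}>\eta$ already follows from mean convexity, and the sharp constant $2\coth(\pi/2)$ never enters, contrary to what you say.

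The fix is to take $\eta=+2\coth(\tfrac32\rho)$. Alternatively, keep your sign and let $\Omega$ contain $\partial_+M$, in which case the barrier reads $H^{\partial_+M}>-\eta$. With the corrected sign, $\eta\to+\infty$ at $\partial_-M$, and $\eta|_{\partial_+M}\le2\coth(\tfrac{\pi}{2}+\tfrac32\varepsilon)<2\coth(\tfrac{\pi}{2})\le H^{\partial_+M}$. This is exactly where the hypothesis is used sharply. Also, $\rho$ must vanish only on $\partial_-M$, not on a neighbourhood of it, or $\eta$ is undefined there.

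\textbf{The ODE check.} For $h=-2\coth(\tfrac32t)$ one has $h'=3\operatorname{csch}^2(\tfrac32t)>0$, so $\tfrac34h^2+h'\neq3$. The correct, sign-independent identity is $\tfrac34h^2-|h'|=3$, which is what your displayed requirement actually uses.

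\textbf{Strictness must sit in the stability potential.} Since $\chi(\mathbb T^2)=0$, a non-strict potential only yields $\chi(\Sigma_0)\ge0$, which a torus satisfies. With $\rho$ $1$-Lipschitz and $\rho\ge\pi/3+\varepsilon$ on $\partial_+M$, all the slack sits at $\partial_+M$, so you must move it inside. For example, replace $\rho$ by $\theta\rho$ with $\theta<1$ and $\theta(\pi/3+\varepsilon)>\pi/3$, and set $\eta=2\coth(\tfrac32\theta\rho)$. This gives $\tfrac12R_g+\tfrac34\eta^2-|\nabla\eta|\ge3(1-\theta)\operatorname{csch}^2(\tfrac32\theta\rho)>0$ and keeps the strict barrier at $\partial_+M$. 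With these corrections your proof goes through.

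\textbf{Comparison with the paper.} The paper only quotes this theorem from Gromov and does not prove it. Its own arguments of this type (Proposition~\ref{key_estimate} and the scalar sketch in Section~\ref{scalar}) anchor the weight at the mean-convex boundary instead: $h\circ\phi=-\Lambda\coth(\tfrac{\pi}{2}(1-\epsilon\phi))$ with $\phi^{-1}(0)$ equal to that boundary. The blow-up therefore happens in the interior, and the far end is never used as a barrier, which is what lets those results cover half-open bands. Your version uses $\partial_-M$ itself as the infinite barrier. For a compact band the two are equivalent.
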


Theorem \ref{refgromov} shows that, under a mean curvature barrier on one boundary component, 
the width of a torical band is uniformly bounded above. To extend such estimates to more general topologies, we introduce the following class of proper functions.

\begin{equation*}
\mathcal{F}^{(\mathfrak g)}_M = \left\{  \phi \in C^\infty(M, [0, 1)) \,\middle|\, 
\begin{array}{l}
\phi \text{ is proper surjective  with }\phi^{-1}(0)=\partial M   \\
\text{ and } \phi^*([\text{pt}]) \text{ satisfies the $(\star)_{\mathfrak g}$-condition}
\end{array}
\right\}.
\end{equation*}
Here $\phi^*([\text{pt}])$ denotes the pull-back of the homology class $[\text{pt}].$ The $(\star)_{\mathfrak g}$-condition means that $\phi^*([\text{pt}])$ cannot be represented by a 2-cycle all of whose connected components have genus at most ${\mathfrak g}.$ 

Our first main result provides an optimal Lipschitz lower bound for a class of proper functions on $3$-manifolds with $\operatorname{Ric} \ge -2$. 

\begin{theorem}\label{teoA}
Let $(M^{3}, g)$ be an orientable noncompact Riemannian manifold with compact boundary $\partial M$. Suppose $\operatorname{Ric}_g \geq -2$, $\mathcal{F}_M^{(0)}\neq\emptyset$, and that the mean curvature of $\partial M$ with respect to the outward unit normal satisfies $H^{\partial M}_g\geq 2\coth(\frac{\pi}{2})$. Then for every $\phi\in\mathcal{F}_M^{(0)}$
    $$
    \operatorname{Lip}\phi \geq \frac{4}{\pi}.
    $$
    If equality holds, then,  up to scaling, $(M^{3}, g)$ is isometric to $M_0 / \Gamma$ for a lattice $\Gamma$ of $\mathbb{R}^2$ and $\phi$ coincides, up to scaling and translation, with the signed distance function on $M_0 / \Gamma$. 
Here $M_0 =  \mathbb{R}^2\times[0, \tfrac{\pi}{4}) $ is equipped with the doubly warped product metric  
$$g = dt^2 +  \cosh^2\left(\frac{\pi}{4}-t\right)ds_1^2 +  \sinh^2\left(\frac{\pi}{4}-t\right) ds_2^2,$$
with $(s_1,s_2)$ standard coordinates on $\mathbb{R}^2$.
 \end{theorem}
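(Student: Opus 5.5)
We argue by contradiction using Gromov's $\mu$-bubble method adapted to the Ricci setting, in the spirit of Zhu's proof for $\operatorname{Ric}\ge 2$. Suppose $\phi\in\mathcal{F}^{(0)}_M$ has $\operatorname{Lip}\phi<4/\pi$. After a small perturbation and rescaling we may assume $\phi$ is smooth with $|\nabla\phi|<4/\pi$. The sublevel region $\{\phi\le 1-\epsilon\}$ is then compact, because $\phi$ is proper, and the distance from $\partial M=\phi^{-1}(0)$ to $\phi^{-1}(1-\epsilon)$ exceeds $\frac{\pi}{4}(1-\epsilon)$. Hence $M$ contains a compact band of width $>\pi/4$ whose inner boundary has $H\ge 2\coth(\pi/2)$.

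On this band we build a weight $h=\eta\circ\phi$, where $\eta$ is modeled on the mean curvature of the level sets $\{t\}\times\mathbb{R}^2$ of the model $M_0$. For the doubly warped metric, the mean curvature of the level set $t$ (with respect to $\partial_t$ pointing away from $\partial M$) is
\[
-\tanh(\tfrac{\pi}{4}-t)-\coth(\tfrac{\pi}{4}-t),
\]
which equals $-2\coth(2(\tfrac{\pi}{4}-t))$. This blows up as $t\to\pi/4$ and takes the value $2\coth(\pi/2)$ at $t=0$ in the outward convention. We therefore choose $\eta(s)=2\coth\big(2(\tfrac{\pi}{4}-\tfrac{\pi}{4}s')\big)$, reparametrized so that $h\to+\infty$ before $\phi$ reaches $1$ and $h\le H^{\partial M}$ on $\partial M$. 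Since $|\nabla\phi|<4/\pi$, we get the strict differential inequality
\[
h^2/2-|\nabla h|>\text{(model value)}.
\]
Minimizing the functional $\mathcal{A}(\Omega)=|\partial^*\Omega|-\int_\Omega h$ among Caccioppoli sets containing a collar of $\partial M$ then gives a smooth $\mu$-bubble $\Sigma$, using the barrier conditions at both ends. Because $\Sigma$ is homologous to a level set of $\phi$, it represents $\phi^*[\mathrm{pt}]$.

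The key step is the stability analysis for $\Sigma$. The $\mu$-bubble stability inequality reads
\[
\int|\nabla\psi|^2\ge\int\big(\operatorname{Ric}(\nu,\nu)+|A|^2+\partial_\nu h\big)\psi^2,
\]
with $H=h$. Scalar curvature would not suffice here; as in Zhu, we use the Ricci bound pointwise together with the Gauss equation, $2K=R-2\operatorname{Ric}(\nu,\nu)+H^2-|A|^2$, and split $|A|^2\ge H^2/2$ only partially. A sharper algebraic trick is needed: write $R_g-2\operatorname{Ric}(\nu,\nu)=2\operatorname{Ric}(e_1,e_1)+2\operatorname{Ric}(e_2,e_2)-2\operatorname{Ric}(\nu,\nu)$, or work directly with $\operatorname{Ric}(\nu,\nu)\ge-2$. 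Testing with $\psi=1$ (or with the first eigenfunction, after a conformal/warping argument) and applying Gauss–Bonnet componentwise, the strict inequality forces some component of $\Sigma$ to have $\int K>0$, hence to be a sphere. Since $(\star)_0$ forbids every component having genus $0$, we need to sharpen this: the inequality must yield $\int_{\Sigma_i}K>0$ for every component $\Sigma_i$. This follows because stability holds componentwise. The resulting contradiction shows $\operatorname{Lip}\phi\ge 4/\pi$.

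I expect the main obstacle to be precisely this step. The Ricci lower bound enters the stability inequality only through $\operatorname{Ric}(\nu,\nu)$, while the model saturates the bound with distinct principal curvatures $\tanh$ and $\coth$. So $|A|^2\ge H^2/2$ is not sharp, and we need Zhu's refined inequality combining $\operatorname{Ric}(\nu,\nu)$, $|A|^2$ and $\partial_\nu h$ to make the constants $4/\pi$ and $2\coth(\pi/2)$ come out exactly. If the naive estimate loses a constant, the first fallback is Zhu's trick: foliate by a warped $\mu$-bubble $\int_\Sigma u - \int_\Omega hu$ with a warping function $u$ chosen on $\Sigma$ to absorb the anisotropy.

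For rigidity in the equality case, we run the argument with $|\nabla\phi|\le 4/\pi$. All inequalities become equalities along a family of $\mu$-bubbles sweeping out $M$. That gives $\operatorname{Ric}(\nu,\nu)=-2$, umbilicity defects matching the model, flat tori ($K\equiv0$, so the components are tori), and $|\nabla\phi|\equiv4/\pi$ with $\nu=\nabla\phi/|\nabla\phi|$. The level sets then form a smooth foliation by flat tori, and the metric splits as $dt^2+g_t$. Solving the resulting Riccati-type ODE for the shape operator, whose eigenvalues must be $\tanh(\tfrac{\pi}{4}-t)$ and $\coth(\tfrac{\pi}{4}-t)$, together with the Codazzi equations to show the principal directions are parallel on the flat tori, yields the doubly warped product on $M_0/\Gamma$, with $\phi$ an affine function of $t$.
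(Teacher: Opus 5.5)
\textbf{Gap in the inequality.} Your architecture matches the paper's. Your weight $h=\pm2\coth\bigl(\tfrac{\pi}{2}(1-\epsilon\phi)\bigr)$ is exactly the paper's (the level-set mean curvature of $M_0$), and the contradiction comes from stability with $\psi\equiv1$ on a single component plus Gauss--Bonnet against $(\star)_0$. Your componentwise remark is correct. But the decisive estimate is left open, and the inequality you do write down, $h^2/2-|\nabla h|>\cdots$, is the wrong one. It comes from dropping $K$ and using $|A|^2\ge H^2/2$ and $\operatorname{Ric}(\nu,\nu)\ge-2$. With this $h$ it gives a contradiction only when $\pi\epsilon\operatorname{Lip}\phi<2$, so it proves $\operatorname{Lip}\phi\ge 2/\pi$, not $4/\pi$.

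No warped $\mu$-bubble is needed. The missing idea is never to estimate $|A|^2$ by $H^2/2$, but to trade it exactly through the Gauss equation $2K=2\operatorname{Sec}(e_1,e_2)+H^2-|A|^2$:
\[
\operatorname{Ric}(\nu,\nu)+|A|^2=\operatorname{Ric}(e_1,e_1)+\operatorname{Ric}(e_2,e_2)+H^2-2K\ \ge\ -4+H^2-2K .
\]
Here the Ricci bound is used only in the two tangential directions. This inequality is an equality on $M_0$, where $\operatorname{Ric}\equiv-2$ and the leaves are flat, so it stays sharp even though the principal curvatures $\tanh$ and $\coth$ differ. Stability then gives $4\pi\chi(\Sigma_i)\ge\int_{\Sigma_i}(h^2+\partial_\nu h-4)$ on each component. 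Since $h^2-4=4\operatorname{csch}^2(\cdot)$ while $|\partial_\nu h|\le\pi\epsilon\operatorname{Lip}\phi\,\operatorname{csch}^2(\cdot)$, the integrand is positive once $\pi\epsilon\operatorname{Lip}\phi<4$. So the correct pointwise condition is $h^2-|\nabla h|>4$; every component is then a sphere, contradicting $(\star)_0$. Also, $R-2\operatorname{Ric}(\nu,\nu)=\operatorname{Ric}(e_1,e_1)+\operatorname{Ric}(e_2,e_2)-\operatorname{Ric}(\nu,\nu)$, without the factors of $2$ you wrote.

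\textbf{Gap in the rigidity step.} Your rigidity paragraph asserts the key facts rather than deriving them. At $\operatorname{Lip}\phi=4/\pi$ the inequality is no longer strict. The limiting weight $-2\coth(\tfrac{\pi}{2}(1-\phi))$ blows up only at the noncompact end, so no barrier produces a $\mu$-bubble for it, and minimizers for approximating weights could escape to infinity. Thus ``a family of $\mu$-bubbles sweeping out $M$'' is not available a priori.

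The paper proceeds in steps:
\begin{enumerate}
\item It perturbs to weights $h_\varepsilon$ with $Q_\varepsilon=h_\varepsilon^2+\tfrac{4}{\pi}h_\varepsilon'$ below $4$ on $\{\phi<c_0\}$ and above $4$ on $\{\phi>c_0\}$. This forces every genus-$\ge1$ bubble component to meet the compact set $\phi^{-1}([0,c_0])$.
\item The uniform area bound and curvature estimates then give a smooth limit. The equality analysis shows it is a flat torus in a level set of $\phi$ with $\nabla\phi=\tfrac{4}{\pi}\nu$.
\item Only then does it build a local foliation by the implicit function theorem, using that the Jacobi operator is $-\Delta$.
\item It shows each leaf again bounds a local minimizer, so the foliation can be continued to the maximal interval.
\item A Riccati comparison then yields the principal curvatures $-\tanh(\tfrac{\pi}{4}-\rho)$ and $-\coth(\tfrac{\pi}{4}-\rho)$.
\end{enumerate}
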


We obtain the following analogue of Gromov’s width estimate
for manifolds satisfying $\operatorname{Ric}\ge -2$.

\begin{corolary}\label{princ_00}
   Let $M=\mathbb T^2\times [-1,1]$ carry a metric $g$ with  $Ric_g \geq -2$ and $H^{\partial_- M}_g\geq 2\coth(\tfrac{\pi}{2})$ with respect to the outward unit normal.
    Then
$$\textrm{width}(M,g)\leq
\frac{\pi}{4}.$$
\end{corolary}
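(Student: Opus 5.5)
We derive Corollary \ref{princ_00} from Theorem \ref{teoA} by contradiction. Suppose $\operatorname{width}(M,g)=d>\pi/4$. The idea is to build a function in $\mathcal{F}^{(0)}$ on a suitable noncompact manifold whose Lipschitz constant is smaller than $4/\pi$.

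\emph{Constructing the noncompact manifold.} Theorem \ref{teoA} needs a noncompact manifold with compact boundary, while $M$ is compact. We therefore attach an infinite cylinder to $\partial_+M$. Let $\widehat M=M\cup_{\partial_+M}(\mathbb T^2\times[1,\infty))$, with boundary $\partial\widehat M=\partial_-M$.

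Attaching a cylinder smoothly while keeping $\operatorname{Ric}\ge -2$ is not straightforward. A cleaner route is to check that the proof of Theorem \ref{teoA} only uses $\phi$ on the region $\{\phi\le 1-\delta\}$. On that region the $\mu$-bubble is confined between the level sets of $\phi$, and the curvature hypothesis is used only there.

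So on $M$ itself we set $\phi=\min\{\rho/d,\,1-\delta\}$, suitably smoothed, where $\rho=\operatorname{dist}_g(\cdot,\partial_-M)$. On the attached cylinder we extend $\phi$ to a proper map into $[1-\delta,1)$; the metric there is irrelevant. Alternatively, one can apply the $\mu$-bubble argument behind Theorem \ref{teoA} directly on the compact band, with $\partial_+M$ playing the role of the end. This is probably how the paper proceeds.

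\emph{Smoothing and the $(\star)_0$-condition.} The distance function $\rho$ is only Lipschitz, with $\operatorname{Lip}(\rho/d)\le 1/d<4/\pi$. A standard smoothing gives a smooth $\phi$ with $\operatorname{Lip}\phi\le (1+\epsilon)/d$, still less than $4/\pi$. We keep $\phi^{-1}(0)=\partial_-M$ by using $\rho$ unchanged near the boundary, where it is smooth.

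Next, $\phi^*[\mathrm{pt}]$ is represented by a regular level set, which separates $\partial_-M$ from $\partial_+M$. It is therefore homologous to $[\mathbb T^2\times\{0\}]$. This class cannot be represented by a union of spheres, since spheres carry no classes in $H_2(\mathbb T^2\times I)$. Hence $\phi\in\mathcal F^{(0)}$.

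The mean curvature hypothesis $H\ge 2\coth(\pi/2)$ holds on $\partial_-M$, which is exactly $\partial\widehat M$. Theorem \ref{teoA} then gives $\operatorname{Lip}\phi\ge 4/\pi$, contradicting $d>\pi/4$. It follows that $\operatorname{width}(M,g)\le\pi/4$.

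\emph{Main obstacle.} The delicate step is the extension. One must either build a complete noncompact extension with $\operatorname{Ric}\ge-2$, or verify that the proof of Theorem \ref{teoA} is local to a compact sublevel set of $\phi$. I would take the second route: the $\mu$-bubble is constructed using the weight $h\circ\phi$ with $h$ blowing up at level $1$. This means only the portion $\phi^{-1}([0,1-\delta])$, which lies in $M$, carries any geometric input. Extending $\phi$ slightly beyond $\partial_+M$ with the blow-up placed there then suffices.
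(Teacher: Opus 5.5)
Your overall plan matches the paper's. You argue by contradiction, build from the distance to $\partial_-M$ a function with $\operatorname{Lip}\phi<4/\pi$, verify $(\star)_0$ via $[\mathbb T^2\times\{0\}]\neq 0$ (that part is fine), and run the $\mu$-bubble argument behind Theorem \ref{teoA} with $\partial_+M$ playing the role of the end. Your one-line ``alternatively'' is exactly what the paper does. The gap is in the route you actually commit to. You take $\phi=\min\{\rho/d,1-\delta\}$ on $M$, extend it into the cylinder, and let $h$ blow up only at level $1$. Then the admissible class $M\setminus\Omega\Subset\phi^{-1}([0,1))$ contains the part of the cylinder where $\phi\in[1-\delta,1)$. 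Nothing in the argument keeps the minimizing boundary out of that part; on the plateau $\{\phi\equiv 1-\delta\}$ the weight $h\circ\phi$ is simply constant. There the second variation needs $\operatorname{Ric}_g\ge-2$ for a metric you declared irrelevant, and $|\nabla\phi|<4/\pi$ for an extension you never controlled. So the claim that only $\phi^{-1}([0,1-\delta])$ carries geometric input is false as stated. Nor can Theorem \ref{teoA} be invoked as a black box on the extended manifold.

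No extension is needed. Proposition \ref{key_estimate}(a) is formulated for half-open bands and does not require completeness. The paper omits the proof as analogous to Corollary \ref{princ_0}, i.e.\ it applies that proposition to $M':=M\setminus\partial_+M=\mathbb T^2\times[-1,1)$. This band is noncompact, has compact connected boundary $\partial_-M$, and satisfies $\operatorname{Ric}_g\ge -2$ everywhere. If $\operatorname{width}(M,g)>\pi/4$, Lemma \ref{Zhu_lemma} with $l=\pi/8$ gives $\tilde\phi:M\to[-l,l]$ with $\tilde\phi^{-1}(-l)=\partial_-M$, $\tilde\phi^{-1}(l)=\partial_+M$ and $|d\tilde\phi|<1$. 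Then $\phi=\frac{4}{\pi}(\tilde\phi+l)$, restricted to $M'$, is a proper surjection onto $[0,1)$ with $\phi^{-1}(0)=\partial_-M$ and $\operatorname{Lip}\phi<4/\pi$. This contradicts Proposition \ref{key_estimate}(a). Arranging $\phi^{-1}(1)=\partial_+M$ exactly is what replaces your truncation and makes the restriction proper and surjective. If you prefer to keep the truncation, the correct locality statement is as follows. In Proposition \ref{key_estimate} the weight $h(t)=-2\coth(\frac{\pi}{2}(1-\epsilon t))$ blows up at $t=1/\epsilon<1$, where $\epsilon>1$ is permitted by the slack $\pi\epsilon\operatorname{Lip}\phi<4$. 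Choose $\delta$ small enough that $1/\epsilon<1-\delta$. This confines the bubble to $\phi^{-1}([0,1/\epsilon])$, a compact subset of $M$ disjoint from $\partial_+M$, so the cylinder never enters.
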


The estimate in Corollary \ref{princ_00} was first obtained by Chai-Sun \cite{CS}  for torical bands (see their Theorem B.1 with $\kappa = -1,$ $t_- = \pi/4,$  $t_+ = \pi/2$, and  $\eta(t)=2\coth(2t)$). In contrast, %Our Theorem \ref{teoB} treats their case to arbitrary half-open bands with $\mathcal{F}_M^{(0)}\neq\emptyset$, and  
our Corollary \ref{princ_00} holds without assuming mean curvature bounds on both boundary components. This type of estimate is part of a broader investigation into rigidity and Ricci/scalar curvature bounds on bands. For more related results in the context of Riemannian bands and open incomplete manifolds, we refer to the recent work of Hirsch-Kazaras-Khuri-Zhang \cite{HKKZ} and references therein.

We also consider another natural perspective, which is to study the behavior of the width of Riemannian bands  whose boundary has genus greater than one. In this setting, the width estimates depend on the area of one boundary component. Here, the area of a boundary component $\Sigma$ with respect to the induced metric will be denoted by $|\Sigma|.$

 \begin{theorem}\label{teoB} 
 Let $(M^3,g)$ be an orientable noncompact Riemannian manifold with compact connected boundary. Assume that
 $\operatorname{Ric}_g \geq -2,$ and that $\mathcal{F}^{(\mathfrak{g}-1)}_M$ is nonempty for some integer $\mathfrak{g}\ge2$. Given $c\in(0,2),$ suppose that $H^{\partial M}_g \geq \sqrt{4-2c}\,\coth(\pi/2)$ with respect to the outward unit normal.
 If 
 $$|\partial M| \leq \frac{4\pi(\mathfrak{g}-1)}{c},$$ 
 then every $\phi\in \mathcal F_M^{(\mathfrak{g}-1)}$ satisfies
    $$
    \operatorname{Lip}\phi \geq
    \frac{2\sqrt{4-2c}}{\pi}.
    $$
    \end{theorem}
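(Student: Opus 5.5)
Argue by contradiction with Gromov's $\mu$-bubble method, exactly as for Theorem \ref{teoA}. Set $\lambda=\sqrt{4-2c}$ and suppose some $\phi\in\mathcal F^{(\mathfrak g-1)}_M$ has $\operatorname{Lip}\phi<2\lambda/\pi$. After smoothing, rescale $\phi$ to a function $\rho=\frac{\pi}{2\lambda}\cdot(\text{const})\,\phi$ with $|\nabla\rho|<1$, so that $\rho=0$ on $\partial M$ and $\rho$ reaches $T<\pi/(2\lambda)$ on a compact region $\Omega_T=\{\rho\le T\}$. On $\Omega_T$ consider the weight
$$h(x)=\lambda\coth(\lambda\,\rho(x)+\varepsilon_0)\quad\text{or more precisely}\quad h=\eta(\rho),\ \eta(t)=\lambda\coth\bigl(\lambda(t+t_0)\bigr),$$
with $t_0$ chosen so that $\eta(0)=\lambda\coth(\pi/2)$ matches the boundary barrier. $\eta$ then solves $\eta'=\lambda^2-\eta^2$, i.e. $\eta^2+2\eta'... $ in the form $\eta'+\eta^2=\lambda^2$ (up to the $1/2$ normalization of the stability operator), and $\eta$ blows up to $+\infty$ before $\rho$ exceeds $\pi/(2\lambda)-t_0$. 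The point of the hypothesis $\operatorname{Lip}\phi<2\lambda/\pi$ is precisely that this blow-up level is attained inside $M$, giving a second barrier. Minimize the brane functional $\mathcal A(\Omega)=|\partial^*\Omega|-\int (\chi_\Omega-\chi_{\Omega_0})h$ among Caccioppoli sets containing $\partial M$. The boundary inequality $H^{\partial M}\ge \lambda\coth(\pi/2)\ge h|_{\partial M}$ (with strict inequality after a small perturbation) and $h\to\infty$ near the blow-up level act as barriers, so a smooth minimizer $\Sigma$ exists in the interior (dimension 3). Since $\Sigma$ separates $\partial M$ from the far region, it represents $\phi^*([\mathrm{pt}])$, and by $(\star)_{\mathfrak g-1}$ some component $\Sigma_1$ has genus $\ge\mathfrak g$.

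Next, on $\Sigma_1$ use stability with the test function $1$. The second variation gives
$$0\le\int_{\Sigma_1}\Bigl(-\operatorname{Ric}(\nu,\nu)-|A|^2-\langle\nabla h,\nu\rangle\Bigr),\qquad H=h.$$
This is the key departure from the scalar-curvature arguments. I would not go through Gauss--Bonnet on $R$; instead I would use $\operatorname{Ric}(\nu,\nu)\ge-2$ directly, and $|A|^2\ge H^2/2=h^2/2$. Together with $|\nabla h|\le|\eta'(\rho)|$ this yields $\int_{\Sigma_1}(2-h^2/2-\eta')\ge0$. With $\eta$ normalized so that $\eta^2/2+\eta'=2-c/ (\ldots)$, this appears too weak by itself, so genus must enter. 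For that, use the Gauss equation instead:
$$-\operatorname{Ric}(\nu,\nu)-|A|^2=\tfrac12(-R_g)+K_\Sigma-\tfrac12(H^2+|A|^2),$$
and estimate $R_g\ge-6$ from $\operatorname{Ric}\ge-2$. A cleaner choice is to take the average of the two expressions, splitting $\operatorname{Ric}(\nu,\nu)$ via $R_g-2\operatorname{Ric}(\nu,\nu)=$ sum of the two tangential sectional-type terms $\ge-4+\ldots$. Integrating $K$ gives $2\pi\chi(\Sigma_1)\le 4\pi(1-\mathfrak g)$. One obtains an inequality of the form
$$\int_{\Sigma_1}\Bigl(\tfrac{3}{4}h^2+\eta'(\rho)\Bigr)\le 4\pi(1-\mathfrak g)+2|\Sigma_1|,$$
and the area comparison $|\Sigma_1|\le|\partial M|\le 4\pi(\mathfrak g-1)/c$ is needed to absorb the genus term as $-c|\Sigma_1|$. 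That comparison is the main obstacle. I would try to get it from minimality of $\mathcal A$ against the competitor $\Omega=\partial M$-collar, which gives $|\Sigma|\le|\partial M|+\int h$ over the enclosed region. Because $h>0$ this does not directly give $|\Sigma|\le|\partial M|$, so I would instead run the argument with a modified functional in which $h$ changes sign, or use a foliation/ODE comparison of area along the level sets.

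Granting $|\Sigma_1|\le 4\pi(\mathfrak g-1)/c$, the right-hand side becomes $\le(2-c)|\Sigma_1|$. The constant choices make the ODE identity $\tfrac34\eta^2+\eta'\cdot(\ldots)=(4-2c)\cdot(\ldots)$ force the left-hand integrand to strictly exceed the right, using $|\nabla\rho|<1$. This is the contradiction, and hence $\operatorname{Lip}\phi\ge 2\sqrt{4-2c}/\pi$. In executing this, the exact normalization of $\eta$ (coefficients $1/2$ versus $3/4$) has to be fixed so that the blow-up time is $\pi/(2\lambda)$ and $\eta(0)=\lambda\coth(\pi/2)$. I expect to adjust $t_0$ accordingly once that normalization is settled.
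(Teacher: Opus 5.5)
Your skeleton matches the paper's: contradiction, a $\mu$-bubble, a component of genus $\ge\mathfrak g$ from the $(\star)_{\mathfrak g-1}$-condition, stability with $\varphi\equiv1$, Gauss--Bonnet, and an area comparison. However, two load-bearing steps are wrong, and the second is a genuinely missing idea.

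\textbf{The weight.} Write $\Lambda=\sqrt{4-2c}$ (your $\lambda$). With $\Lambda t_0=\pi/2$, your $\eta(t)=\Lambda\coth(\Lambda(t+t_0))$ is finite and decreasing for $t\ge0$; it tends to $\Lambda$ and never blows up. The profile must instead be $\Lambda\coth(\tfrac{\pi}{2}-\Lambda t)$, which blows up at $t=\pi/(2\Lambda)$.

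Your sign is also reversed. With $h>0$ and $\Omega\supset\partial M$, enlarging $\Omega$ lowers both terms of $\mathcal A$, so a weight tending to $+\infty$ attracts the minimizer rather than acting as a barrier. The paper takes $h(t)=-\Lambda\coth(\tfrac{\pi}{2}(1-\epsilon t))<0$ on $[0,1/\epsilon)$, with $\epsilon>1$ and $\pi\epsilon\operatorname{Lip}\phi<2\Lambda$. It minimizes $\mathcal H^2(\partial^*\Omega)-\int_{M\setminus\Omega}h\circ\phi$, where $M\setminus\Omega$ is the side adjacent to $\partial M$. The weight term is then nonnegative, and comparing with the competitor bounded by $\partial M$ gives $|\hat\Sigma|\le\mu(\hat\Omega)\le|\partial M|$ at once. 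The ``main obstacle'' you flag is an artifact of the wrong sign; no sign-changing weight or foliation is needed.

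\textbf{The stability inequality.} Your Riccati equation $\eta'+\eta^2=\Lambda^2$ needs a stability inequality with coefficient $1$ in front of $h^2$, and you never produce one.
The coefficient $1/2$ (from $|A|^2\ge H^2/2$ with $\operatorname{Ric}(\nu,\nu)\ge-2$) loses the genus entirely.
The coefficient $3/4$ (Gauss equation with $R_g\ge-6$, or your averaging) is the scalar-curvature computation, and it cannot reach the sharp constant. As $c\to0$ it yields at best $\operatorname{Lip}\phi\ge3/\pi$ instead of $4/\pi$ (Gromov's $\pi/3$ rather than $\pi/4$).
Your displayed inequality with $+2|\Sigma_1|$ follows from neither route; both give $3|\Sigma_1|$.

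The missing idea is to apply $\operatorname{Ric}\ge-2$ in the two \emph{tangential} directions. For an orthonormal frame $e_1,e_2$ of $T\hat\Sigma$, the Gauss equation gives
\[
\operatorname{Ric}(\nu,\nu)=\operatorname{Ric}(e_1,e_1)+\operatorname{Ric}(e_2,e_2)-2\,\mathrm{sec}_g(e_1,e_2)\ge-4-2K_{\hat\Sigma}+H^2-|A|^2 .
\]
Hence $\operatorname{Ric}(\nu,\nu)+|A|^2\ge-4+(h\circ\phi)^2-2K_{\hat\Sigma}$, with nothing discarded from $|A|^2$.

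Stability with $\varphi\equiv1$ then gives $\int_{\hat\Sigma}2K_{\hat\Sigma}\ge\int_{\hat\Sigma}\bigl(-4+(h\circ\phi)^2+\nu(h\circ\phi)\bigr)$. Since $h'<0$ and $|\langle\nabla\phi,\nu\rangle|\le\operatorname{Lip}\phi$, the Lipschitz assumption gives $(h\circ\phi)^2+\nu(h\circ\phi)>\Lambda^2=4-2c$. Therefore $8\pi(1-\mathfrak g)\ge4\pi\chi(\hat\Sigma)>-2c\,|\hat\Sigma|$, that is, $|\hat\Sigma|>4\pi(\mathfrak g-1)/c\ge|\partial M|$. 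This contradicts the area comparison and gives the sharp bound $\operatorname{Lip}\phi\ge2\sqrt{4-2c}/\pi$.
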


We present the following consequence, which holds for manifolds for which $H_2(M; \mathbb Z)$ contains no classes represented by closed surfaces of genus  $g(\Sigma
)$ less than two. 

\begin{corollary}\label{princ_0}
Let $\Sigma$ be a closed surface of genus $g(\Sigma) \geq 2$ and let $M = \Sigma \times [-1,1]$ be endowed with a Riemannian metric $g$ such that $\operatorname{Ric}_g \geq -2$. Given  $c \in (0,2),$ assume that $H^{\partial_- M}_g \geq \sqrt{4-2c}\,\coth\!\left(\frac{\pi}{2}\right)$  with respect to the outward-pointing unit normal, and the area of $\partial_- M$ satisfies
$$
|\partial_- M| \leq \frac{4\pi\bigl(g(\Sigma)-1\bigr)}{c}.
$$
Then the width of $(M,g)$ is bounded by
$$\textrm{width}(M,g)\leq \frac{\pi}{2\sqrt{4-2c}}.$$
\end{corollary}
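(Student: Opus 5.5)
We deduce Corollary \ref{princ_0} from Theorem \ref{teoB}, in the same way Corollary \ref{princ_00} comes from Theorem \ref{teoA}. Set $L=\textrm{width}(M,g)=\operatorname{dist}_g(\partial_-M,\partial_+M)$ and argue by contradiction, assuming $L>\frac{\pi}{2\sqrt{4-2c}}$. The aim is to produce a noncompact manifold $M'$ with compact connected boundary $\partial_-M$, satisfying the hypotheses of Theorem \ref{teoB} with $\mathfrak g=g(\Sigma)$, together with a function $\phi\in\mathcal F^{(\mathfrak g-1)}_{M'}$ with $\operatorname{Lip}\phi<\frac{2\sqrt{4-2c}}{\pi}$.

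\textbf{Step 1: the half-open band.} Let $M'=M\setminus\partial_+M\cong\Sigma\times[-1,1)$. It is noncompact and orientable, and its boundary $\partial_-M$ is compact and connected. The hypotheses $\operatorname{Ric}_g\ge-2$, $H^{\partial_-M}_g\ge\sqrt{4-2c}\coth(\pi/2)$ and $|\partial_-M|\le 4\pi(\mathfrak g-1)/c$ carry over unchanged.

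\textbf{Step 2: the function.} Let $\rho=\operatorname{dist}_g(\cdot,\partial_-M)$; it is $1$-Lipschitz and satisfies $\rho\ge L$ on $\partial_+M$. Pick $L'$ with $\frac{\pi}{2\sqrt{4-2c}}<L'<L$ and form $\psi=\min(\rho/L',1)$. Smooth it to a function $\phi:M'\to[0,1)$ such that:
\begin{itemize}
\item $\phi^{-1}(0)=\partial_-M$;
\item $\phi\to1$ exactly as one approaches $\partial_+M$, so that $\phi$ is proper and surjective;
\item $\operatorname{Lip}\phi<\frac{2\sqrt{4-2c}}{\pi}$.
\end{itemize}
To build it, compose a mollification of $\rho$ (Lipschitz constant $\le1+\varepsilon$, vanishing only on $\partial_-M$ after modifying near the boundary by a collar argument) with a smooth increasing profile. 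This profile has slope less than $\frac{1}{L'}(1+\varepsilon)^{-1}\cdot$const on $[0,L')$. Near $\partial_+M$, where $\rho>L'$, we only need $\phi\to1$, and there is room because $L'<L$. Concretely, take $\phi=\eta\circ\tilde\rho+\chi$, where $\chi$ is a small increasing function of the collar coordinate $t\in[1-\delta,1)$ of $\Sigma\times[-1,1)$ tending to the right limit. All derivatives can be kept small, so the Lipschitz constant stays below $(1+2\varepsilon)/L'<\frac{2\sqrt{4-2c}}{\pi}$ for small $\varepsilon$.

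\textbf{Step 3: the topological condition.} Since $\phi$ is proper, $\phi^*[\mathrm{pt}]$ is represented by a regular level set $\phi^{-1}(s)$, which separates $\partial_-M$ from the end. Its class in $H_2(M')\cong H_2(\Sigma)\cong\mathbb Z$ is $\pm[\Sigma]$. Now suppose $[\Sigma]=\sum[\Sigma_i]$ with each $\Sigma_i$ of genus $\le\mathfrak g-1$. Since $M'$ deformation retracts onto $\Sigma$, composing with the projection gives maps $\Sigma_i\to\Sigma$ whose degrees sum to $1$. But a map from a surface of genus less than $\mathfrak g$ to $\Sigma$ has degree $0$, because $|\deg f|\,\chi(\Sigma)$ is bounded in absolute value by $|\chi(\Sigma_i)|$ (Kneser), and genus $0$ or $1$ sources also give degree zero. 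The degrees therefore sum to $0$, a contradiction. Hence the $(\star)_{\mathfrak g-1}$-condition holds and $\phi\in\mathcal F^{(\mathfrak g-1)}_{M'}$.

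\textbf{Conclusion.} Theorem \ref{teoB} now gives $\operatorname{Lip}\phi\ge\frac{2\sqrt{4-2c}}{\pi}$, contradicting Step 2. Hence $L\le\frac{\pi}{2\sqrt{4-2c}}$. The main technical nuisance is the smoothing in Step 2: getting a proper smooth $\phi$ with $\phi^{-1}(0)=\partial M$ exactly while keeping the Lipschitz constant controlled. This is handled by the strict inequality $L'<L$ and standard Greene–Wu type smoothing. The Kneser degree argument in Step 3 is the only genuinely topological input.
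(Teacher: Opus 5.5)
Your proposal is correct and follows essentially the paper's argument. Assuming $\textrm{width}(M,g)>\frac{\pi}{2\sqrt{4-2c}}$, you build on the half-open band $M\setminus\partial_+M$ a proper function $\phi$ with $\phi^{-1}(0)=\partial_-M$ and $\operatorname{Lip}\phi<\frac{2\sqrt{4-2c}}{\pi}$, and contradict Theorem~\ref{teoB}, i.e.\ Proposition~\ref{key_estimate}(b) with $\Lambda=\sqrt{4-2c}$. The paper gets $\phi$ by rescaling the map of Lemma~\ref{Zhu_lemma}, which your Step 2 rebuilds by hand; your Step 3, checking the $(\star)_{\mathfrak g-1}$-condition via Kneser's degree inequality, supplies a verification the paper leaves implicit.
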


A key ingredient in the proof of our results is Gromov’s notion of a $\mu$-bubble \cite{Grom0, Grom}, which can be viewed as a minimizer of a weighted area-type functional. More specifically, for a Riemannian manifold $(M^n,g)$ and a given function $h$, a $\mu$-bubble is defined as a minimizer, or more generally, a critical point, of the functional
\begin{equation*}
\Omega \mapsto \mathcal{H}^{n-1}(\partial \Omega) - \int_{\Omega} h,
\end{equation*}
where $\mathcal{H}^{n-1}$ denotes the induced $(n-1)$-dimensional Hausdorff measure, and $\Omega \subset M$ varies within an appropriate class of subsets, for instance, Caccioppoli sets. The $\mu$-bubble technique generalizes and localizes the stable minimal hypersurface method, and is effective in the setting of noncompact manifolds. The $\mu$-bubble technique has found many applications across a range of problems, see, for example, \cite{CRZ, CS, CL0, CL, LY, Mazet, Wang, Zhu2, Zhu}, and references therein.

In the scalar curvature setting, one can obtain an analogous estimate for manifolds with higher-genus boundary components.  Specifically, let $M = \Sigma \times [-1,1]$ admit a metric with scalar curvature $R_g \ge -6$. Suppose that there exists a constant $c \in (0,2)$ such that the mean curvature of $\partial_- M$ with respect to the outward unit normal satisfies $H^{\partial_- M}_g\geq c\cdot\coth(\tfrac{\pi}{2}).$ If, in addition, 
$$|\partial_-M|\leq \frac{4\pi(\mathfrak{g}-1)}{3-\frac{3}{4}c^2},$$
then (see Section~\ref{scalar}) we can show that 
$$\textrm{width}(M,g)\leq\frac{\pi}{2c}.$$ 
Although the resulting width estimate is weaker than the corresponding Ricci curvature estimate, it should be viewed as the scalar curvature analog of the Corollary \ref{princ_0}. 
This suggests a sharp limiting inequality as $c$ goes to $0$, where the width estimate degenerates and the area of higher-genus surfaces becomes the relevant geometric quantity. 
 
To obtain a rigidity statement in the scalar curvature setting, additional
control on the geometry at infinity is required. Indeed, in the noncompact
setting one must prevent degeneration phenomena, which naturally leads to the
bounded geometry assumption. More precisely, we assume that there exist
constants $\Lambda>0$ and $i_0>0$ such that
$$
|\mathrm{Rm}_g|\le \Lambda
\qquad\text{and}\qquad
\operatorname{inj}_M\ge i_0>0,
$$
where $\mathrm{Rm}_g$  denotes the Riemann curvature tensor of 
$g$ and $\operatorname{inj}_M$ denotes the injectivity radius of 
$M$%.

Define
$$ \mathfrak g_0 := \min\left\{ g(\Sigma):\Sigma\subset M\mbox{ connected, closed, and homologically nontrivial} \right\}. $$
\begin{theorem}\label{princ}
   Let $(M^{3}, g)$ be an orientable complete Riemannian manifold with connected and mean-convex boundary $\partial M$. Assume that $(M,g)$ has  bounded geometry,  scalar curvature $R_g \geq -6$ and that  $H_2(M; \mathbb Z)$  is non‑trivial and does not contain spherical and toroidal classes. 
Then
     \begin{equation}\label{ineq_thm}
         |\partial M| \geq \frac{4\pi}{3} (\mathfrak g_0-1).
     \end{equation}
 The equality holds if and only if $(M, g)$ is isometrically covered  by a cylinder $\partial M\times[0,+\infty),$ where  $\partial M$ has constant Gauss curvature equal to $-3.$ Moreover, $\mathfrak g_0$ coincides with the genus of $\partial M.$
\end{theorem}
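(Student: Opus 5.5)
We combine an area-minimization argument in the homology class of $\partial M$ with a stability/Gauss–Bonnet inequality, and then run a foliation argument for the equality case.

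\emph{Step 1: the inequality.} Since $\partial M$ is mean-convex, it acts as a barrier, so area minimization in $M$ is well posed near the boundary. Bounded geometry supplies uniform local area and curvature estimates and prevents minimizing sequences from escaping to infinity. We therefore minimize area among closed surfaces homologous to $\partial M$ and obtain a stable minimal surface $\Sigma$; if $\Sigma$ touches $\partial M$, the strong maximum principle forces $\Sigma=\partial M$.

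We also need $[\partial M]\neq0$. One route is that $H_2(M;\mathbb Z)\ne0$ together with the minimizing argument produces a nontrivial class. Alternatively, if $[\partial M]=0$, we minimize in a nontrivial class instead.

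Each component $\Sigma_i$ of $\Sigma$ is homologically nontrivial. Because no spherical or toroidal classes exist, each has genus $g_i\ge 2$, and in fact $g_i\ge\mathfrak g_0$. Test the stability inequality with $\varphi=1$ and use the Schoen–Yau rewriting $\operatorname{Ric}(\nu,\nu)+|A|^2=\tfrac12(R_g-2K+|A|^2)$ together with Gauss–Bonnet:
$$0\le \int_{\Sigma_i}\Big(K-\tfrac12R_g-\tfrac12|A|^2\Big)\le 4\pi(1-g_i)+3|\Sigma_i|.$$
Hence $|\Sigma_i|\ge \tfrac{4\pi}{3}(g_i-1)\ge\tfrac{4\pi}{3}(\mathfrak g_0-1)$. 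Since $\Sigma$ minimizes area in the class of $\partial M$, we get $|\partial M|\ge|\Sigma|\ge|\Sigma_i|$, which gives \eqref{ineq_thm}.

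\emph{Step 2: equality.} If equality holds, then $\partial M$ is itself area-minimizing in its class. Every inequality above is then an equality: $\partial M$ is totally geodesic, $R_g=-6$ along it, $K\equiv-3$, it is infinitesimally rigid (its Jacobi operator has first eigenvalue $0$ with constant eigenfunction), and its genus equals $\mathfrak g_0$.

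Next we produce the local splitting by the Bray–Brendle–Neves / Nunes scheme. Using the implicit function theorem, foliate a collar by constant mean curvature surfaces $\Sigma_t$ whose normal speed $f_t$ is positive. Integrating the Riccati equation $-H'(t)=\Delta f+(\operatorname{Ric}(\nu,\nu)+|A|^2)f$ against $f^{-1}$ and applying Gauss–Bonnet shows $H(t)\le0$. Combined with area minimality, $|\Sigma_t|\ge|\partial M|$, this forces each $\Sigma_t$ to be totally geodesic with $K=-3$ and $f$ constant. So near the boundary the metric is the product $dt^2+g_{\partial M}$.

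We then continue by an open–closed argument. Each leaf $\Sigma_t$ again attains equality in \eqref{ineq_thm} and is area-minimizing, so the foliation extends. Bounded geometry guarantees uniform existence intervals, so it extends for all $t\ge0$. The normal exponential map $\partial M\times[0,\infty)\to M$ is then a local isometry from a complete product, and hence a Riemannian covering. For the converse, the cylinder satisfies equality directly: its boundary has area $\tfrac{4\pi}{3}(g-1)$ by Gauss–Bonnet with $K=-3$.

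\emph{Main obstacle.} The hardest part will be the existence of the minimizer in the noncompact setting: we must rule out area escaping to infinity and ensure the class is nontrivial. The first tool to try is the monotonicity formula under bounded geometry combined with a cutoff/exhaustion argument. A further difficulty is globalizing the foliation, which needs uniform control on the rigid leaves; we plan to obtain this from bounded geometry and curvature estimates for stable minimal surfaces.
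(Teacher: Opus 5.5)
Your Step~1 depends on producing an area minimizer in the homology class of $\partial M$. The claim that bounded geometry keeps minimizing sequences from escaping to infinity is false under exactly the hypotheses of the theorem.

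Let $\Sigma$ have genus $\mathfrak g\ge2$ and a metric $g_{-3}$ of Gauss curvature $-3$. Put $M=\Sigma\times[0,\infty)$ and $g=dt^2+f(t)^2g_{-3}$ with $f=2+e^{-t}$. This example satisfies every hypothesis:
\begin{itemize}
\item $R_g=-6f^{-2}-4f''f^{-1}-2(f')^2f^{-2}>-6$;
\item the boundary has mean curvature $2/3>0$ with respect to the outward normal;
\item the geometry is bounded;
\item $H_2(M;\mathbb Z)\cong\mathbb Z$ has no spherical or toroidal classes.
\end{itemize}
Take any closed surface $S$ homologous to $\partial M$. The projection $S\to\Sigma$ has degree one, and its area Jacobian from $g|_S$ to $g_{-3}$ is at most $f^{-2}<\tfrac14$. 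Hence $|S|>4\cdot\tfrac{4\pi}{3}(\mathfrak g-1)$. The slices $\Sigma\times\{t\}$, however, have area decreasing to exactly this value, so the infimum is not attained.

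Monotonicity only bounds area from below near points of the surface; it does not confine the surface to a compact set. Your inequality comes from this minimizer. So does your later claim that, in the equality case, $\partial M$ and each leaf $\Sigma_t$ are area-minimizing in their class. The argument as written therefore proves neither.

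The missing idea is to make the variational problem compact, at the price of an error that vanishes in the limit. The paper does this with Gromov's $\mu$-bubbles.
\begin{itemize}
\item Take $\phi$ a smoothing of the distance to $\partial M$ with $\operatorname{Lip}\phi<1$, and $h_\varepsilon=(1-\eta)h_\varepsilon^-$ with $h_\varepsilon^-(t)=-2\varepsilon\coth(\tfrac12-\tfrac32\varepsilon t)$.
\item This weight is nonpositive and vanishes near $\partial M$, so mean-convexity is the inner barrier. It tends to $-\infty$ at $\phi=1/(3\varepsilon)$, which is the outer barrier. It also satisfies $\tfrac32h_\varepsilon^2+2h_\varepsilon'\ge-C\varepsilon^2$.
\item The minimizers $\Omega_k$ therefore exist in compact regions. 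Stability with $\varphi\equiv1$, $|A|^2\ge H^2/2$, $h_\varepsilon'\le0$ and Gauss--Bonnet give $3|\Sigma_k|\ge4\pi(\mathfrak g_0-1)-o(1)|\Sigma_k|$, for a component $\Sigma_k\subset\partial\Omega_k$ of genus at least $\mathfrak g_0$.
\item Since $h_\varepsilon\le0$, you get $|\Sigma_k|\le\mu(\Omega_k)\le|\partial M|$.
\item Now use as competitor any surface $S$ cobounding a compact region with $\partial M$. Since $h_{\varepsilon_k}\to0$ locally uniformly, the same argument gives $|S|\ge\tfrac{4\pi}{3}(\mathfrak g_0-1)$.
\end{itemize}

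With that input, your Step~2 becomes a viable alternative to the paper's equality argument. In the equality case $\partial M$ is one-sided area-minimizing, mean-convexity forces $H\equiv0$, and your Nunes-type foliation from the boundary with open--closed continuation runs directly. The paper instead passes to a limit of the $\Sigma_k$, which it shows meet $\phi^{-1}([0,\tfrac16])$. It then uses Frensel's theorem to rule out a noncompact limit, and splits near that limit.

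Finally, drop the fallback of minimizing in another class when $[\partial M]=0$, since it loses the comparison with $|\partial M|$. It is also unnecessary. For noncompact connected $M$ one has $H_3(M,\partial M;\mathbb Z)=0$, so $H_2(\partial M;\mathbb Z)\to H_2(M;\mathbb Z)$ is injective and $[\partial M]\neq0$.
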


We have 
\begin{corollary}
Let $\Sigma$ be a closed surface of genus $g(\Sigma)\geq 2$, and let
$M=\Sigma\times [0,+\infty)$ be endowed with a Riemannian metric $g$ with 
$
R_g\geq -6
$
and mean-convex boundary. Assume that $(M,g)$ has bounded geometry and that
$$
|\partial M| \leq \frac{4\pi\bigl(g(\Sigma)-1\bigr)}{3}.
$$
Then $(M,g)$ is isometrically covered by the Riemannian product
$$
\bigl(\Sigma\times[0,+\infty),\,dt^2+g_\Sigma\bigr),
$$
where $g_\Sigma$ is a metric of constant Gauss curvature $-3$ on $\Sigma$.
\end{corollary}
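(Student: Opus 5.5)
We deduce the corollary from Theorem \ref{princ}, so the plan is to check that $M=\Sigma\times[0,+\infty)$ meets its hypotheses and then read off the equality case.

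\textbf{Step 1: hypotheses of Theorem \ref{princ}.} The manifold $M$ is orientable, since $\Sigma$ is orientable as a closed surface of genus $\ge 2$ (this is implicit in the statement). Its boundary $\partial M=\Sigma\times\{0\}$ is connected and mean-convex by assumption. Completeness is part of the bounded geometry hypothesis. Since $M$ deformation retracts onto $\Sigma$, we have $H_2(M;\mathbb Z)\cong H_2(\Sigma;\mathbb Z)\cong\mathbb Z$, generated by $[\Sigma]$; in particular it is nontrivial.

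\textbf{Step 2: no spherical or toroidal classes, and $\mathfrak g_0=g(\Sigma)$.} Let $S\subset M$ be a closed connected surface representing $k[\Sigma]$ with $k\neq0$. Compose the inclusion with the projection $p:M\to\Sigma$ to get a map $f=p|_S:S\to\Sigma$ of degree $k\neq 0$. By Kneser's inequality, a nonzero-degree map between closed orientable surfaces satisfies $g(S)\ge g(\Sigma)$; equivalently, $f^*$ is injective on $H^1$ with a nondegenerate cup product pairing. Two consequences follow. First, no sphere or torus represents a nonzero class, because $g(\Sigma)\ge2$. Second, $\mathfrak g_0\ge g(\Sigma)$, and the slice $\Sigma\times\{0\}$ realizes equality, so $\mathfrak g_0=g(\Sigma)$. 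The one point that needs care is orientability of embedded $S$: a closed surface embedded in an orientable $3$-manifold and carrying a nonzero integral class is two-sided and orientable, so Kneser applies.

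\textbf{Step 3: conclusion.} Theorem \ref{princ} now gives $|\partial M|\ge \frac{4\pi}{3}(g(\Sigma)-1)$. Together with the assumed reverse inequality this forces equality. By the rigidity part of Theorem \ref{princ}, $(M,g)$ is isometrically covered by $\partial M\times[0,+\infty)$ with the product metric $dt^2+g_{\partial M}$, where $g_{\partial M}$ has constant Gauss curvature $-3$. Identifying $\partial M$ with $\Sigma$ gives the stated form.

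The only step with any real content is Step 2, namely ruling out spherical and toroidal classes and identifying $\mathfrak g_0$. The standard degree and genus inequality handles it.
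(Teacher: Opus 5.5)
Your proposal is correct and follows the paper's route: the paper states this corollary as an immediate consequence of Theorem~\ref{princ} without a separate argument, and you reduce it to that theorem in the same way. Your check that $H_2(M;\mathbb Z)\cong\mathbb Z[\Sigma]$ contains no spherical or toroidal classes and that $\mathfrak g_0=g(\Sigma)$ (via the nonzero-degree genus inequality), together with using the area hypothesis to force equality, fills in the details the paper leaves implicit.
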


One of the main difficulties in proving Theorem \ref{princ} is to guarantee that the sequence of approximating $\mu$-bubbles converges to a well-defined limiting surface instead of diverging to infinity. This requires a careful control mechanism to retain the sequence within a fixed region and ensure compactness.  This argument is inspired by the ideas in \cite{CCE,CEM,Liu,Zhu}.

We also mention the related cuspidal rigidity results of R\"ade \cite[Theorem 2.15]{R}, who considered manifolds manifolds $(M^n,g)$ with $3 \le n \le 7$ satisfying $R_g \ge -n(n-1)$ and 
$H_{\pm \partial M} \ge \pm (n-1)$ (where $H_{\pm \partial M}$ denotes the mean curvature of $\pm\partial M$). 
In a complementary direction, Hao--Hu--Liu--Shi \cite{HHLS} studied complete, noncompact 
manifolds $(M^n,g)$ under the additional assumptions that $\partial M$ is connected, incompressible in $M$, 
and belongs to the class $\mathcal{C}_{\mathrm{deg}}$,\footnote{A closed, connected 
manifold $X$ belongs to $\mathcal{C}_{\mathrm{deg}}$ if it is aspherical and no compact 
manifold $X'$ admitting a nonzero-degree map to $X$ can carry a metric of positive scalar curvature.} and proved a cuspidal boundary rigidity theorem.   However, these results rely heavily on the specific topology $T^2\times[a,b]$. Nevertheless, a systematic study of width estimates under negative curvature assumptions, especially for manifolds whose boundary components have genus greater than one, remains largely open.

\medskip

\noindent\textbf{Outline of the paper.}  Section \ref{preliminaries} contains the necessary preliminaries on $\mu$-bubbles and prescribed mean curvature hypersurfaces. In Section \ref{teoab} we prove Theorem \ref{teoA} and \ref{teoB}, including the higher-genus extensions of the width estimates under lower Ricci curvature bounds. Section \ref{scalar} is devoted to the proof of Theorem \ref{princ},   where we establish a sharp lower bound for the boundary area under scalar curvature and bounded geometry assumptions.

\medskip

\noindent\textbf{Acknowledgments:} 
The author expresses sincere gratitude to Lucas Ambrozio for insightful conversations during the Summer Program 2025 at IMPA, and to Ivaldo Nunes and Almir Santos for helpful discussions. He thanks Xiaoxiang Chai and Yukai Sun for valuable comments and suggestions as well. Part of this work was carried out during a visit to the Instituto de Matemática Pura e Aplicada (IMPA), whose hospitality is gratefully acknowledged. The author was partially supported by CNPq, Brazil, under grant numbers 307419/2022-3, 408834/2023-4, 444531/2024-6, 400078/2025-2, 403770/2024-6 and Coordenação de Aperfeiçoamento de Pessoal de Nível Superior (CAPES/ MATH-AMSUD 88887.985521/2024-00), Brazil.

\section{The Prescribed mean curvature functional}\label{preliminaries}
Let $(M^3, g)$ be a complete, noncompact, orientable Riemannian 3-manifold with connected compact boundary $\partial M$.   Assume that there exists $\phi\in \mathcal{F}^{(\mathfrak g)}_M.$ 
Given a smooth function $h:[0,T)\to\mathbb{R}$, we define the functional 
\begin{align}\label{modified_functional}
\mu^h(\Omega)=\mathcal{H}^2(\partial^*\Omega)-\int_{M}\chi_{M\setminus\Omega}\cdot h\circ\phi\,\mathrm{d}\mathcal{H}^3
\end{align}
on the class of Caccioppoli\footnote{For a reference on Caccioppoli sets, see \cite{Giu}.
} sets $\Omega\subset M$ with reduced boundary $\partial^*\Omega$, subject to the condition
\begin{align*}
M\setminus\Omega\Subset\phi^{-1}([0,T)).
\end{align*}
This modified functional was introduced by Zhu \cite{ZhuC}.

Let $\{\Omega(t)\}_{t\in (-\varepsilon,\varepsilon)}$ be a smooth one-parameter family of regions with $\Omega(0)=\Omega$, and let $\varphi$ denote the normal speed at $t=0$. The first variation of $\mu^h$ is given by
\begin{equation*}
\frac{d}{dt} \bigg|_{t=0} \mu^h(\Omega(t)) =\int_{\partial \Omega} (H_{\partial \Omega} + h \circ \phi) \varphi,
\end{equation*}
where $H_{\partial \Omega}$ denotes the mean curvature of $\partial\Omega$  as
the boundary of $\Omega$ with respect to the outward-pointing unit normal vector field $\nu$.

Since the variation  is arbitrary, it follows that $\Omega$ is critical for $\mu^h$ if and only if $H_{\partial \Omega} = -h \circ \phi$ along $\partial \Omega$. In this case, $\Omega$ is called a $\mu$-bubble, and $\partial\Omega$ is a hypersurface with prescribed mean curvature $-h \circ \phi$.

For a $\mu$-bubble $\Omega$, the second variation of $\mu^h$ is given by
\begin{equation}\label{second}
\frac{d^2}{dt^2}\bigg|_{t=0} \mu^h(\Omega(t)) = \int_{\partial \Omega} |\nabla \varphi|^2 - ( \operatorname{Ric}_g(\nu,\nu)+|A|^2 + \langle \nabla (h \circ \phi), \nu \rangle ) \varphi^2,
\end{equation}
where $\operatorname{Ric}_g$ is the Ricci curvature of $(M^3,g)$ and $A$ is the second fundamental form of $\partial\Omega$.

We say that a $\mu$-bubble $\Omega$ is stable if the second variation is nonnegative for every smooth normal variation, i.e., if the right-hand side of \eqref{second} is nonnegative for all smooth functions $\varphi$.

A   $\mu$-bubble $\hat \Omega$ is a local minimizer if it minimizes the localized energy among nearby competitors:
 for every Caccioppoli set $\Omega$ such that $\Omega \mathbin{\Delta} \hat \Omega \Subset M$ (i.e., the symmetric difference is compactly contained in $M$) and for every open set $U$ with compact closure satisfying $\Omega \mathbin{\Delta} \hat{\Omega} \Subset U$, we have
$$
\mu^h_U(\Omega) \ge \mu^h_U(\hat{\Omega}),
$$
where
$$
\mu^h_U(\Omega) = \mathcal{H}^2(\partial^*\Omega \cap U) - \int_{\Omega \cap U} (h \circ \phi) \, d\mathcal{H}^3.
$$

Recall that a \emph{band} is a connected compact manifold $M$ together with a decomposition of its boundary:
\begin{equation*}
\partial M = \partial_{-}M \,\dot{\cup}\, \partial_{+}M,
\end{equation*}
where $\partial_{-}M$ and $\partial_{+}M$ are nonempty unions of connected components of $\partial M$.

\begin{definition}
Let $(M, g, \partial_{-}M, \partial_{+}M)$ be a Riemannian band. A smooth function $b : [0,T) \to \mathbb{R}$ is said to satisfy the \emph{barrier condition} for the functional \eqref{modified_functional} if, for each connected component $S \subset \partial_{+}M$:
\begin{itemize}
\item $b$ smoothly extends to $S$ and satisfies $H_S\ge b|_S$, where $H_S$ is the mean curvature of $S$ with respect to the outward normal; or
\item $b\to-\infty$ towards $S$.
\end{itemize}
\end{definition}

Gromov first claimed the existence and regularity of a minimizer for $\mu^h$ among Caccioppoli (see Section 5.1 of \cite{Grom}). A  rigorous proof was later established by Zhu \cite[Proposition 2.1]{Zhu2} and by Chodosh and Li \cite[Proposition~12]{CL} given in the following result (in our context).
\begin{proposition}[Existence of $\mu$-bubble]\label{exist}
For a Riemannian band $\left(M^n, g\right)$ with $3 \leq n \leq 7$, if the function $h:[0,T)\to\mathbb{R}$ satisfies the barrier condition, then there exists an $\Omega \in \mathcal{C}$ with smooth boundary such that
$$
\mu^h(\Omega)=\inf _{\Omega^{\prime} \in \mathcal{C}} \mu^h\left(\Omega^{\prime}\right),
$$
where 
$$\mathcal{C}=\{\textrm{Caccioppoli sets }\Omega'\subset M\mbox{ such that  }M\setminus\Omega\Subset\phi^{-1}([0,T))\}.$$ 
\end{proposition}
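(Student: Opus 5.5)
The existence statement will follow the standard direct-method argument of Zhu \cite[Proposition 2.1]{Zhu2} and Chodosh--Li \cite[Proposition~12]{CL}, adapted to the modified functional \eqref{modified_functional}. Two features make this possible. The functional only involves $M\setminus\Omega$, which is constrained to lie in the region $\phi^{-1}([0,T))$; since $\phi$ is proper, this region has compact closure for each $T'<T$. The barrier condition then gives control near the far end.

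\textbf{Step 1 (bounded below and compactness).} If $h$ is bounded on the relevant range, then $\mu^h(\Omega')\ge -\int_{\phi^{-1}([0,T'])}|h\circ\phi|$ is finite from below. If instead $h\to-\infty$ towards the end, the term $-\int\chi_{M\setminus\Omega'}h\circ\phi$ is nonnegative near the end, so it is harmless. Take a minimizing sequence $\Omega_k$. The perimeters $\mathcal H^2(\partial^*\Omega_k)$ are uniformly bounded in compact sets, so BV compactness gives a subsequence with $\chi_{\Omega_k}\to\chi_\Omega$ in $L^1_{loc}$. Lower semicontinuity of perimeter, together with continuity of the volume term on compact sets, makes $\Omega$ a minimizer, provided the complements do not escape.

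\textbf{Step 2 (confinement via the barrier).} This is the main obstacle: showing that $M\setminus\Omega$ stays compactly inside $\phi^{-1}([0,T))$ and does not leak toward $\partial_+$. In the case $b\to-\infty$, I would first find $T_1<T$ with $h<-C$ on $(T_1,T)$, with $C$ exceeding the mean curvature of the level sets $\phi^{-1}(t)$ for regular values $t$ near $T$. Replacing $\Omega_k$ by $\Omega_k\cup\phi^{-1}((t,T))$ then decreases $\mu^h$: by the divergence theorem applied to $\nabla\phi/|\nabla\phi|$, the perimeter added is bounded by the perimeter removed plus a volume term, and that volume term is dominated by the gain $\int|h|$. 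So the sequence may be assumed confined.

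In the first case of the barrier condition, $H_S\ge b$ and $h$ extends smoothly. Here the components of $\partial_+M$ act as barriers through the maximum principle for prescribed mean curvature. The minimizer may touch $S$, but it is then either equal to $S$ or disjoint from it, by comparison with a foliation by equidistant hypersurfaces near $S$ whose mean curvature is perturbed so as to exceed $h$. Near $\partial M=\phi^{-1}(0)$, $\Omega$ already contains no constraint, so any interaction there is handled the same way.

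\textbf{Step 3 (regularity).} The minimizer $\Omega$ is an almost-minimizer of perimeter, since the volume term is $O(r^3)$ in balls of radius $r$. Standard regularity theory for $3\le n\le7$ therefore gives a smooth $\partial\Omega$, and the first variation shows that it has mean curvature $-h\circ\phi$. Of these steps, Step 2 requires the real care.
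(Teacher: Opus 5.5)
Your route is the one the paper relies on. The paper gives no argument of its own and defers to Zhu's Proposition~2.1 and Chodosh--Li's Proposition~12, whose proofs consist of BV compactness, a calibration near the end where $h\to-\infty$, and regularity of almost-minimizers for $n\le 7$.

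Your Step~2 is correct in the case $h\to-\infty$ at a regular level $\phi^{-1}(T)$, which is the case used in Propositions~\ref{key_estimate} and~\ref{prop_help}. Take $X=\nabla\phi/|\nabla\phi|$ and $E=\phi^{-1}((t,T))\setminus\Omega$. The divergence theorem gives $\mu^h(\Omega\cup E)-\mu^h(\Omega)\le\int_E\bigl(h\circ\phi-\operatorname{div}X\bigr)$, and this is negative once $|E|>0$. Interior regularity is also fine.

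The gap is the behaviour where $h$ stays finite, above all at $\partial M=\phi^{-1}(0)$, which you dismiss with ``handled the same way''. Since $h<0$ in the applications, $\partial M$ has to be counted in $\partial^*\Omega$; otherwise $\Omega=M$ is a trivial minimizer with $\mu^h=0$. So near $\partial M$ you are solving a one-sided obstacle problem, and its minimizers need not be smooth: they can cling to part of $\partial M$ and peel off along a free boundary across which the curvature jumps.

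To exclude this you need a barrier hypothesis at $\partial M$, namely $H_{\partial M}\ge -h(0)$ with respect to the outward normal. This is not part of the barrier condition as stated. It does hold in every application: $H\ge\Lambda\coth(\pi/2)=-h(0)$ in Section~\ref{teoab}, and mean convexity with $h_\varepsilon\equiv 0$ near $0$ in Section~\ref{scalar}. With this hypothesis the argument closes as follows:
\begin{itemize}
\item One-sided minimality gives $H_{\partial\Omega}\le -h\circ\phi$ weakly at contact points.
\item $\partial M$ is a supersolution lying on the other side of $\partial\Omega$.
\item The strong maximum principle then forces every component of $\partial\Omega$ that touches $\partial M$ to be a component of $\partial M$.
\end{itemize}

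The same care is needed for your smooth alternative at $S\subset\partial_+M$. With a strict inequality $H_S>b|_S$, as arranged in item c) of Proposition~\ref{classification}, your calibration works verbatim with nearby parallel hypersurfaces. With only $H_S\ge b|_S$ there is in general no foliation near $S$ with mean curvature above $h$, so the ``perturbed'' foliation you invoke need not exist. Moreover, if the minimizer then coincides with $S$, its complement is not compactly contained in $\phi^{-1}([0,T))$, so it does not lie in $\mathcal C$. That alternative therefore needs a strict inequality or a closed admissible class.
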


\section{Proof of Theorem \ref{teoA} and \ref{teoB}}\label{teoab}
We now turn to the proofs of our main Ricci curvature estimates. Using $\mu$-bubble method, we establish Theorem \ref{teoA} and Theorem \ref{teoB} below. 
\subsection{Estimates under Ricci curvature}

A connected manifold with compact boundary is called a half-open band if it has at least one noncompact end. Equivalently, it may be regarded as a band obtained from a compact band by removing one or more of its boundary components.

\begin{proposition}\label{key_estimate}
     Let $( M^{3}, g)$ be an oriented half-open Riemannian band with  $Ric_g \geq -2.$ Assume that $\phi: M\to [0,1)$ is a proper surjective smooth function. 
\begin{itemize}
    \item[a)]  Suppose that $\phi$ satisfies $(\star)_0$-condition and the boundary $\Sigma=\partial_-M$ is connected with mean curvature $H\geq2\coth\left(\frac{\pi}{2}\right).$ Then
      $$\operatorname{Lip}\phi\geq\frac{4}{\pi}.$$
    \item[b)] Fix an integer  ${\mathfrak g}\geq2$ and  a constant $\Lambda$ with $0<\Lambda<2.$ Suppose that $\phi$ satisfies $(\star)_{\mathfrak g-1}$-condition and the boundary $\Sigma=\partial_-M$ is connected with mean curvature $H\geq \Lambda\coth\left(\frac{\pi}{2}\right)$. If moreover  
  $$|\Sigma|\leq \frac{8\pi(\mathfrak g-1)}{4-\Lambda^2},$$
  then 
        $$\operatorname{Lip}\phi \geq \frac{2\Lambda}{\pi}.$$
\end{itemize}
\end{proposition}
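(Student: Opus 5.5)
The plan is to argue by contradiction, using a $\mu$-bubble built from the model warped product. Write $L=\operatorname{Lip}\phi$ and set $\rho=\phi/L$, so that $|\nabla\rho|\le1$ and $\rho\in[0,1/L)$.

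\textbf{Set-up for (a).} Suppose $L<4/\pi$. Then $1/L>\pi/4$, so for small $\delta>0$ the set $\rho^{-1}([0,\pi/4+\delta])$ is compact, by properness of $\phi$. The doubly warped model of Theorem~\ref{teoA} motivates the weight: its level sets $\{t\}$ have mean curvature $\tanh(\tfrac{\pi}{4}-t)+\coth(\tfrac{\pi}{4}-t)=2\coth\bigl(2(\tfrac{\pi}{4}-t)\bigr)$. I would therefore take
$$h_\delta(s)=2\coth\Bigl(2\bigl(\tfrac{\pi}{4}+\delta-s\bigr)\Bigr),$$
viewed as a function of $\rho$. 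The signs are to be chosen so that the prescribed mean curvature is $\pm h$ consistent with the first variation formula of Section~\ref{preliminaries}. Then $h_\delta\to\pm\infty$ as $\rho\to\tfrac{\pi}{4}+\delta$, which is the barrier at the far end. At $\rho=0$ we have $h_\delta<2\coth(\pi/2)\le H$, which is the barrier at $\partial_-M$, strictly because of $\delta$.

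\textbf{Existence and topology of the bubble.} Proposition~\ref{exist} gives a smooth minimizing $\mu$-bubble $\Omega$ with $\partial\Omega$ lying in the compact region $\{0<\rho<\pi/4+\delta\}$. Since $\partial\Omega$ separates $\partial M$ from the end, it is homologous to a level set $\phi^{-1}(\mathrm{pt})$, so $[\partial\Omega]=\phi^*([\mathrm{pt}])$. The $(\star)_0$-condition then forces some component $\Sigma'$ to have genus $\ge1$. Under $(\star)_{\mathfrak g-1}$ in part (b), some component has genus $\ge\mathfrak g$. In either case Gauss–Bonnet gives $\int_{\Sigma'}K\le 4\pi(1-g(\Sigma'))\le0$.

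\textbf{Stability inequality (main obstacle).} Stability of $\Sigma'$ gives, for every $\varphi$,
$$\int|\nabla\varphi|^2\ge\int\bigl(\operatorname{Ric}(\nu,\nu)+|A|^2+\partial_\nu(h\circ\rho)\bigr)\varphi^2 .$$
The two sides are controlled as follows:
\begin{itemize}
\item $|\partial_\nu(h\circ\rho)|\le|h'|$, since $|\nabla\rho|\le1$;
\item $|A|^2\ge H^2/2$ with $H=h$.
\end{itemize}
With $\varphi\equiv1$ and only $\operatorname{Ric}(\nu,\nu)\ge-2$, one gets the integrand $h^2/2-2-|h'|=2\operatorname{csch}^2-4\operatorname{csch}^2<0$, so no contradiction.

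This is where I expect the difficulty to lie. The Ricci hypothesis must be used in the tangential directions as well, through Gauss's equation, to bring $K$ into the integrand, in the manner of Zhu's argument for $\operatorname{Ric}\ge2$. Concretely, $\operatorname{Ric}(e_1,e_1)+\operatorname{Ric}(e_2,e_2)\ge-4$ gives $R-\operatorname{Ric}(\nu,\nu)\ge-4$. Gauss's equation then reads $2K=R-2\operatorname{Ric}(\nu,\nu)+H^2-|A|^2$.

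I would try the following steps in order:
\begin{enumerate}
\item Use a test function $\varphi=f(\rho)$, or a first eigenfunction, that mimics the warping factors $\cosh$ and $\sinh$.
\item Form a suitable linear combination $\alpha\cdot(\text{stability})+\beta\cdot(\text{Gauss})$ so that the $\operatorname{Ric}(\nu,\nu)$ and $|A|^2$ terms are absorbed.
\item Integrate, use $\int K\le0$, and obtain a strictly negative integrand built from $h_\delta$ and $h_\delta'$, hence a contradiction.
\end{enumerate}
The choice of $h$ must be exactly such that the ODE $h'=\tfrac12h^2-2$ (up to the tangential correction) is saturated for the model. The slack $\delta$ then yields strictness.

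\textbf{Part (b).} The same scheme applies with rescaled weight $h(s)=\Lambda\coth\bigl(\Lambda(\tfrac{\pi}{2\Lambda}+\delta-s)\bigr)$, and with $L<2\Lambda/\pi$ assumed. The constant terms no longer cancel: a leftover term $(2-\Lambda^2/2)|\Sigma'|$ appears, to be compared against $-\int K\ge4\pi(\mathfrak g-1)$. The comparison is made via the area hypothesis $|\Sigma|\le 8\pi(\mathfrak g-1)/(4-\Lambda^2)$. Since $\Omega$ is minimizing, I would bound $|\Sigma'|$ by $|\Sigma|$ plus a signed weighted-volume term, and with the sign of $h$ near $\partial M$ handle that term so that the inequality closes.
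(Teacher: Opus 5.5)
Your architecture matches the paper's: argue by contradiction, build a $\coth$-weighted $\mu$-bubble, use the $(\star)$-condition to get a component $\Sigma'$ of genus $\ge1$ (resp. $\ge\mathfrak g$), apply Gauss--Bonnet, and compare $|\Sigma'|\le\mu(\Omega)\le|\Sigma|$. Two steps, however, are not in place.

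\textbf{The central inequality.} The step you flag as the main obstacle is left as a tentative plan. That plan (non-constant test functions mimicking $\cosh/\sinh$, linear combinations) is not what is needed. Substitute Gauss's equation into the stability inequality with $\varphi\equiv1$:
\[
\operatorname{Ric}(\nu,\nu)+|A|^2=\operatorname{Ric}(e_1,e_1)+\operatorname{Ric}(e_2,e_2)+H^2-2K\ge -4+h^2-2K .
\]
This gives directly
\[
\int_{\Sigma'}2K\ge\int_{\Sigma'}\bigl(h^2+\partial_\nu(h\circ\rho)-4\bigr),
\]
which is exactly the paper's inequality. The ODE to saturate is $h'=h^2-\Lambda^2$, not $h'=\tfrac12h^2-2$. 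Your $\coth$ weight does satisfy the correct one.

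\textbf{Strictness.} This is the more serious gap. Your weight satisfies $h_\delta^2-h_\delta'\equiv\Lambda^2$ identically. After normalizing $\rho=\phi/L$ you only know $|\nabla\rho|\le1$. Hence you get only $h^2+\partial_\nu(h\circ\rho)\ge\Lambda^2$, not a strict inequality. The shift $\delta$ makes the barrier at $\partial_-M$ strict and places the singularity inside the compact region, but it contributes nothing to the integrand. Consequently:
\begin{itemize}
\item in (a) you end with $\chi(\Sigma')\ge0$, which a torus satisfies;
\item in (b) you end with $|\Sigma|\ge 8\pi(\mathfrak g-1)/(4-\Lambda^2)$, which is compatible with the non-strict area hypothesis.
\end{itemize}
So neither case yields a contradiction.

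The slack coming from $\operatorname{Lip}\phi<2\Lambda/\pi$ must be put into the gradient term. The paper takes $h(t)=-\Lambda\coth\bigl(\tfrac{\pi}{2}(1-\epsilon t)\bigr)$ with $\epsilon>1$ and $\pi\epsilon\operatorname{Lip}\phi<2\Lambda$. This gives
\[
(h\circ\phi)^2+\nu(h\circ\phi)\ge\Lambda^2+\Bigl(\Lambda^2-\tfrac{\pi}{2}\Lambda\epsilon\operatorname{Lip}\phi\Bigr)\operatorname{csch}^2\Bigl(\tfrac{\pi}{2}(1-\epsilon\phi)\Bigr)>\Lambda^2,
\]
and hence $4\pi\chi(\Sigma')>(\Lambda^2-4)|\Sigma'|$ strictly. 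In your normalization this amounts to dividing $\phi$ by some $L'\in(L,2\Lambda/\pi)$ instead of by $L$. Then $|\nabla\rho|\le L/L'<1$, and since $h'\neq0$ everywhere the inequality is strict pointwise. You may keep $\delta$ only to make the boundary barrier strict.

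\textbf{The comparison in (b).} The bound $|\Sigma'|\le|\Sigma|$ needs no special handling ``near $\partial M$''. The weight has a fixed sign on its whole domain, so with the correct orientation the weighted-volume term is nonnegative, and $|\Sigma'|\le\mu(\Omega)\le|\Sigma|$ follows at once.
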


\begin{proof}
We give a unified approach for both items. Let $\phi : M \to [0,1)$ be a surjective proper smooth function and assume by contradiction that
$$
\operatorname{Lip}\phi < \frac{2\Lambda}{\pi}, 
$$
where $0<\Lambda\leq2.$ Let
$h:[0,1/\epsilon)\to\mathbb R$ be a function defined by
$$
h(t)=-\Lambda \coth \left(\frac{\pi}{2}(1-\epsilon t)\right),
$$
where $\epsilon>1$ will be fixed later.

Define the prescribed mean curvature functional
$$
\mu(\Omega)=\mathcal{H}_g^{2}\left(\partial^* \Omega\right)-\int_{M \backslash \Omega} h \circ \phi\; \mathrm{d} \mathcal{H}_g^{3}
$$
on the class $\mathcal{C}$ of Caccioppoli sets $\Omega$ in $M$ such that 
$$
M\backslash \Omega \Subset \phi^{-1}([0, 1/ \epsilon)).
$$
Assuming that $\epsilon$ is a regular value of $\phi$, it follows from Proposition \ref{exist} that there exists a smooth minimizer $\hat{\Omega}$.

Since  $\partial \hat{\Omega}$ is homologous to $\phi^{-1}(0)$, $\partial \hat{\Omega}$ has a component $\hat{\Sigma}$ with genus greater than one if $\mathcal{F}^{(\mathfrak g-1\geq 1)}_M\neq\emptyset,$  and  greater than zero if $\mathcal{F}^{(0)}_M\neq\emptyset.$
Moreover, it follows from the first variation of the prescribed mean curvature functional that the mean curvature of $\hat{\Sigma}$ with respect to $\nu$ satisfies $H_{\hat\Sigma}=-h \circ \phi$ on $\hat{\Sigma}.$

The second variation formula \eqref{second} gives, for $\varphi=1$,
$$
0 \leq \int_{\hat{\Sigma}} -\left(\operatorname{Ric}(\nu, \nu)+|A|^2+\nu(h \circ \phi)\right) \, d\mathcal{H}_g^2.
$$
Using
$$
\operatorname{Ric}(\nu, \nu)+|A|^2\geq -4+(h \circ \phi)^2-2K_{\hat\Sigma},
$$
we obtain
$$
\int_{\hat{\Sigma}} 2K_{\hat\Sigma} \, d\mathcal{H}_g^2
\geq \int_{\hat{\Sigma}} -4+(h \circ \phi)^2+\nu(h \circ \phi)\, d\mathcal{H}_g^2.
$$
A direct computation yields
$$
(h \circ \phi)^2+\nu(h \circ \phi)
\geq \Lambda^2 + \left(\Lambda^2-\frac{\pi}{2}\Lambda \epsilon \operatorname{Lip}\phi\right)
\mbox{csch}^2 \left(\frac{\pi}{2}(1-\epsilon\phi)\right).
$$
Since we may assume
$
\pi \epsilon \operatorname{Lip}\phi < 2\Lambda,
$ 
it follows that $
(h \circ \phi)^2+\nu(h \circ \phi) > \Lambda^2,
$
and therefore
$$
4\pi \chi(\hat{\Sigma})
=\int_{\hat{\Sigma}} 2K_{\hat\Sigma}\, d\mathcal{H}_g^2
> \int_{\hat{\Sigma}} (-4+\Lambda^2)\, d\mathcal{H}_g^2.
$$

Now we distinguish the two cases:

\medskip

\noindent
\textbf{ Case $0<\Lambda<2.$} Then 
$
4\pi \chi(\hat{\Sigma}) >(-4+\Lambda^2) |\hat{\Sigma}|,
$
which implies
$$
|\hat{\Sigma}| > \frac{8\pi (g(\hat{\Sigma})-1)}{-4+\Lambda^2}.
$$
Using the comparison
$$
|\hat{\Sigma}| \leq \mathcal{H}_g^{2}(\partial^* \hat\Omega)-\int_{M \backslash \hat \Omega} h \circ \phi\; \mathrm{d} \mathcal{H}_g^{3} \leq |\Sigma|,
$$
we conclude
$$
|\Sigma| > \frac{8\pi (\mathfrak g-1)}{-4+\Lambda^2},
$$
which is a contradiction.
\medskip

\noindent
\textbf{Case  $\Lambda=2$.} Then $
4\pi \chi(\hat{\Sigma}) > 0,
$
which implies $\chi(\hat{\Sigma})>0$, a contradiction since
$g(\hat{\Sigma})\geq 1$.

This completes the proof.
\end{proof}

\subsection{Equality Case}
In this subsection, we prove the rigidity statement in Theorem \ref{teoA}. Fix a constant $\Lambda>0$. 
We now establish a more general rigidity result from which Theorem \ref{teoA} follows by setting $\Lambda=2$.  Assume that 
$$
\mbox{Ric}_g\geq-\Lambda^2/2
$$
and that  the mean curvature satisfies $H\geq\Lambda\coth\left(\frac{\pi}{2}\right)$. Assume also that there exists a function $\phi\in\mathcal{F}^{(0)}_M$ such that 
$$
\operatorname{Lip}\phi=\frac{2\Lambda}{\pi}.
$$

To prove the equality case in Theorem \ref{teoA}, we will need a family of functions $h_\varepsilon$ defined on a subdomain $[0,T_\varepsilon)$ of $[0,1)$ that approximate the limiting function
$$
h(t) = -\Lambda \coth\!\left(\frac{\pi}{2}(1-t)\right),
$$
but which satisfy a crucial sign-changing property for the quantity
$$
Q_\varepsilon(t) := h_\varepsilon(t)^2 + \frac{2\Lambda}{\pi} h_\varepsilon'(t).
$$
This sign change will be essential for localizing the minimizing surfaces within a compact region.

The construction proceeds as follows. Fix $c_0 \in (0,1)$ and define a small perturbation map $\phi_\varepsilon(t):=t+\varepsilon \alpha(t),$ where  $\alpha:[0,1]\to\mathbb{R}$ is given by 
$
\alpha(t):=1-(t-c_0)^2. 
$
Since $\varepsilon<1/2,$ we have  
 $\phi_\varepsilon'(t)>0$ for all $t\in[0,1].$  Moreover, there exists a unique $T_\varepsilon\in(0,1)$ such that
$
\phi_\varepsilon(T_\varepsilon)=1.
$

Define
$$
h_\varepsilon(t):=
-\Lambda
\coth\!\Bigl(
\frac{\pi}{2}\bigl(1-\phi_\varepsilon(t)\bigr)
\Bigr),
\qquad
t\in[0,T_\varepsilon).
$$ 
Note that
$$
\begin{aligned}
h_\varepsilon^2+\frac{2\Lambda}{\pi}h_\varepsilon'
=
\Lambda^2
+
\varepsilon
\frac{2\Lambda}{\pi}
h'(\phi_\varepsilon)
\alpha'(t).
\end{aligned}
$$
As
$
\alpha'(t)=-2(t-c_0),
$
it follows that
$
t<c_0$ implies $h_\varepsilon^2+\frac{2\Lambda}{\pi}h_\varepsilon'
<
\Lambda^2,
$
and when $
c_0<t<T_\varepsilon$ we have $h_\varepsilon^2+\frac{2\Lambda}{\pi}h_\varepsilon'
>
\Lambda^2.
$

Since $\phi_\varepsilon(T_\varepsilon)=1$,
$$
1-T_\varepsilon
=
\varepsilon\alpha(T_\varepsilon),
$$
and therefore $T_\varepsilon\to1$ as $\varepsilon\to0$. 
Finally,
$
\phi_\varepsilon(t)=t+\varepsilon\alpha(t)\to t
$
uniformly on compact subsets of $[0,1)$, hence
$$
h_\varepsilon(t)\to
-\Lambda
\coth\!\Bigl(\frac{\pi}{2}(1-t)\Bigr)
$$
uniformly in compact subsets of $[0,1)$.

These properties are summarized in the following lemma.

\begin{lemma}\label{intersect}
Fix $\Lambda > 0$. For sufficiently small $\varepsilon > 0$, there exists a smooth function $h_\varepsilon : [0, T_\varepsilon) \to \mathbb{R}$, with $T_\varepsilon \nearrow 1$ as $\varepsilon \to 0$, and a constant $c_0 \in (0,1)$ such that $h_\varepsilon' < 0$ everywhere on its domain. The quantity
$$
Q_\varepsilon(t) := h_\varepsilon(t)^2 + \frac{2\Lambda}{\pi} h_\varepsilon'(t)
$$
satisfies $Q_\varepsilon(t) < \Lambda^2$ for $t < c_0$ and $Q_\varepsilon(t) > \Lambda^2$ for $c_0 < t < T_\varepsilon$. Moreover, $h_\varepsilon$ converges uniformly on compact subsets of $[0,1)$ to $h(t) = -\Lambda \coth\!\bigl(\frac{\pi}{2}(1-t)\bigr)$ as $\varepsilon \to 0$.
\end{lemma}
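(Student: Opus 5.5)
The plan is to take $h_\varepsilon := h\circ\phi_\varepsilon$ exactly as constructed above, with $\phi_\varepsilon(t)=t+\varepsilon\alpha(t)$, $\alpha(t)=1-(t-c_0)^2$ and $h(t)=-\Lambda\coth\bigl(\frac{\pi}{2}(1-t)\bigr)$. Each claim in Lemma \ref{intersect} will be checked directly. The key structural observation is that $h$ solves a Riccati equation. Set $u=\frac{\pi}{2}(1-t)$. Then
$$h'(t)=-\frac{\pi\Lambda}{2}\operatorname{csch}^2 u \qquad\text{and}\qquad h^2-\Lambda^2=\Lambda^2\operatorname{csch}^2 u,$$
so that
$$h^2+\frac{2\Lambda}{\pi}h'\equiv\Lambda^2,\qquad h'<0 \text{ on } (-\infty,1).$$
Everything else follows from this identity and the chain rule.

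First I will settle the domain. For $0<\varepsilon<1/2$ and $t\in[0,1]$ we have $\phi_\varepsilon'(t)=1-2\varepsilon(t-c_0)\ge 1-2\varepsilon>0$, so $\phi_\varepsilon$ is strictly increasing. Moreover $\phi_\varepsilon(0)=\varepsilon(1-c_0^2)<1$ for small $\varepsilon$, and $\phi_\varepsilon(1)=1+\varepsilon\alpha(1)>1$ because $\alpha(1)=1-(1-c_0)^2>0$. The intermediate value theorem and monotonicity then give a unique $T_\varepsilon\in(0,1)$ with $\phi_\varepsilon(T_\varepsilon)=1$. On $[0,T_\varepsilon)$ we have $\phi_\varepsilon<1$, so $h_\varepsilon$ is smooth there. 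Since $1-T_\varepsilon=\varepsilon\alpha(T_\varepsilon)\in(0,\varepsilon]$, it follows that $T_\varepsilon\to1$. To get monotone convergence $T_\varepsilon\nearrow1$, I will note that $\phi_\varepsilon(t)$ is increasing in $\varepsilon$ for fixed $t$ (since $\alpha>0$ on $[0,1]$), so $T_\varepsilon$ decreases in $\varepsilon$.

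Next comes the derivative and the sign change. The chain rule gives $h_\varepsilon'=h'(\phi_\varepsilon)\phi_\varepsilon'<0$. Inserting $\phi_\varepsilon'=1+\varepsilon\alpha'$ and using the Riccati identity at the point $\phi_\varepsilon(t)$,
$$Q_\varepsilon(t)=\Lambda^2+\varepsilon\,\frac{2\Lambda}{\pi}\,h'(\phi_\varepsilon(t))\,\alpha'(t),\qquad \alpha'(t)=-2(t-c_0).$$
Because $h'(\phi_\varepsilon(t))<0$, the correction term has the sign of $t-c_0$. Hence $Q_\varepsilon<\Lambda^2$ for $t<c_0$ and $Q_\varepsilon>\Lambda^2$ for $c_0<t<T_\varepsilon$. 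For small $\varepsilon$ we have $c_0<T_\varepsilon$, so both regions are nonempty.

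Finally, for uniform convergence I fix $a<1$. For $\varepsilon$ small, $[0,a]\subset[0,T_\varepsilon)$ and $|\phi_\varepsilon-t|\le\varepsilon$ on $[0,a]$, so $\phi_\varepsilon([0,a])\subset[-\varepsilon,(1+a)/2]$. Since $h$ is uniformly continuous on this compact set, $h_\varepsilon\to h$ uniformly on $[0,a]$. The only point needing care, and the main potential obstacle, is bookkeeping of the domain: $\phi_\varepsilon$ must stay strictly below $1$ on $[0,T_\varepsilon)$ and on $[0,a]$, where $h$ blows up. Monotonicity of $\phi_\varepsilon$ together with the bound $|\phi_\varepsilon-t|\le\varepsilon$ handles this.
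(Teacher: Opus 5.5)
Your proposal is correct and matches the paper's own construction: the same reparametrization $\phi_\varepsilon(t)=t+\varepsilon\alpha(t)$ with $\alpha(t)=1-(t-c_0)^2$, and the same use of the identity $h^2+\frac{2\Lambda}{\pi}h'\equiv\Lambda^2$ to obtain $Q_\varepsilon=\Lambda^2+\varepsilon\frac{2\Lambda}{\pi}h'(\phi_\varepsilon)\alpha'$. You also supply details the paper leaves implicit, all of which check out: that $T_\varepsilon$ increases to $1$ monotonically, that $h_\varepsilon'<0$, and the domain bookkeeping needed for uniform convergence.
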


Set $h_k := h_{\epsilon_k}$, where $\{\varepsilon_k\}$ is a sequence of positive numbers converging to $0$ such that $T_{\epsilon_k}$ are regular values of $\phi$. For each $k$,  Proposition \ref{exist} guarantees the existence of a minimizer $\hat{\Omega}_k$ for the functional $\mu^{h_k}$.  
After passing to a subsequence,  $\hat\Omega_k$ converge to a local minimizer $\hat\Omega$ of $\mu^{h}$ with $h(t) = -\Lambda\coth(\tfrac{\pi}{2}(1-t))$.

Inspired by the arguments of \cite[Section 3]{Zhu2}, we prove the following.

\begin{proposition}\label{classification}
Let $(M^3,g)$ be an orientable non‑compact Riemannian manifold with compact boundary $\partial M.$ Suppose $\operatorname{Ric}_g \ge -\Lambda^2/2$ for some $\Lambda>0$, and assume that there exists a function $\phi \in \mathcal{F}_M^{(0)}$ such that
$$
\operatorname{Lip}\phi = \frac{2\Lambda}{\pi}.
$$ Then the following holds.
\begin{itemize}
    \item[a)]\textbf{Structure of the minimizer.} The boundary $\partial\hat{\Omega}$ contains a connected component $\hat{\Sigma}$ contained in $\phi(t_0)$ for 
some $t_0\in[0,c_0],$ which is a flat torus. Moreover, for every unit vector $X\in T_p\hat\Sigma$ we have $\operatorname{Ric}_g(X,X)=-\Lambda^2/2$, and 
$\langle \nabla\phi,\nu\rangle = \frac{2\Lambda}{\pi}$, where $\nu$ is the inward unit normal vector field on $\partial \hat\Omega$.
    \item[b)]\textbf{Foliation near $\hat\Sigma$.} There exists $\delta>0$ and a smooth local foliation $\{\hat\Sigma(s)\}_{s\in(-\delta,\delta)}$ of a neighborhood of $\hat{\Sigma}=\hat{\Sigma}(0)$ such that  each $\hat\Sigma(s)$ is a graph over $\hat\Sigma$ with graph function $u_s$ taken along $\nu$. 
These functions satisfy
    \begin{equation}\label{normalization}
    \frac{\partial u_s}{\partial s}\bigg|_{s=0}  = 1 \quad \text{and} \quad \frac{1}{|\hat \Sigma|}\int_{\hat \Sigma} u_s \, dv_g = s,   
    \end{equation}
    and the quantity $\hat H(s) - h\circ\phi$ is constant on $\hat\Sigma(s)$, where $\hat H(s)$ denotes the mean curvature of $\hat\Sigma(s)$ with respect to the unit normal induced by the foliation.
\item[c)]
 Suppose that, for every $|s|<\delta$, the region bounded by
$\hat{\Sigma}$ and $\hat{\Sigma}(s)$ is disjoint from
$\partial\hat{\Omega}\setminus\hat{\Sigma}$.
Let $\hat{\Omega}(s)$ be the region whose boundary is
$$
\partial\hat{\Omega}(s)
=
\hat{\Sigma}(s)\cup
\bigl(\partial\hat{\Omega}\setminus\hat{\Sigma}\bigr).
$$
Then $\hat{\Omega}(s)$ is a local minimizer of $\mu^{h}$.
\end{itemize}
\end{proposition}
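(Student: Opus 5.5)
The plan follows Zhu's rigidity scheme \cite[Section 3]{Zhu2}, applied to the limit local minimizer $\hat\Omega$ of $\mu^{h}$ with $h(t)=-\Lambda\coth(\frac{\pi}{2}(1-t))$.

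\textbf{Step 1: part a).} We first need a component of $\partial\hat\Omega$ that does not run off to infinity. For this we use the sign change of $Q_\varepsilon$ from Lemma \ref{intersect}. Suppose a component $\hat\Sigma_k$ of $\partial\hat\Omega_k$ of genus $\ge1$ lies entirely in $\{\phi>c_0\}$. Then stability with $\varphi=1$, together with $\operatorname{Ric}(\nu,\nu)+|A|^2\ge -\Lambda^2+H^2-2K$ (Gauss equation plus $|A|^2\ge H^2/2$) and $|\nabla\phi|\le 2\Lambda/\pi$, gives
$$4\pi\chi(\hat\Sigma_k)\ge\int_{\hat\Sigma_k}\bigl(Q_{\varepsilon_k}(\phi)-\Lambda^2\bigr)>0.$$
This contradicts genus $\ge1$. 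Hence each $\hat\Sigma_k$ meets $\{\phi\le c_0\}$, and by curvature estimates for stable prescribed-mean-curvature surfaces the $\hat\Sigma_k$ converge smoothly to a component $\hat\Sigma$ of $\partial\hat\Omega$ meeting $\phi^{-1}([0,c_0])$. The homology class is preserved, so $\hat\Sigma$ is not a sphere. Repeating the stability computation for $h$, where $Q=h^2+h'\langle\nabla\phi,\nu\rangle\ge\Lambda^2$ with equality iff $\langle\nabla\phi,\nu\rangle=2\Lambda/\pi$, yields $0\ge 4\pi\chi(\hat\Sigma)\ge 0$. Every inequality must then be an equality: $\chi=0$, $A$ is umbilic, $\operatorname{Ric}(\nu,\nu)=-\Lambda^2/2$ together with the trace condition, $\langle\nabla\phi,\nu\rangle=2\Lambda/\pi$, and $\varphi=1$ lies in the kernel of the Jacobi operator. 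Since the first eigenfunction is constant, the pointwise integrand $2K$ equals $-\Lambda^2+\mathrm{Ric}$ terms $+H^2/2+\dots=0$, so $K\equiv0$ and $\hat\Sigma$ is a flat torus. The Ricci condition along tangent directions follows from the equality in $\operatorname{Ric}\ge-\Lambda^2/2$ combined with $R_{\hat\Sigma}=0$, via the Gauss equation $2K=R-2\operatorname{Ric}(\nu,\nu)+H^2-|A|^2$. Finally, $|\nabla\phi|=2\Lambda/\pi$ forces $\nabla\phi\parallel\nu$, so $\phi$ is constant on $\hat\Sigma$ and $\hat\Sigma\subset\phi^{-1}(t_0)$ with $t_0\le c_0$.

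\textbf{Step 2: part b).} Apply the implicit function theorem to the map $(u,k)\mapsto\bigl(H(\operatorname{graph}u)+h\circ\phi-k,\ \tfrac1{|\hat\Sigma|}\int u\bigr)$ on $C^{2,\alpha}\times\mathbb R$ near $(0,0)$. Its linearization in $u$ is the Jacobi operator $L=\Delta+\operatorname{Ric}(\nu,\nu)+|A|^2+\nu(h\circ\phi)$. From Step 1 the potential vanishes, so $L=\Delta$, whose kernel is the constants. Prescribing the mean value removes this kernel, and the image of $\Delta$ (mean-zero functions) is complemented by the constant $k$. This produces $u_s$ with the normalization \eqref{normalization}, $\partial_su_s|_{s=0}=1$, and $\hat H(s)-h\circ\phi$ constant on $\hat\Sigma(s)$. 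Positivity of $\partial_s u_s$ gives the foliation.

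\textbf{Step 3: part c).} Write $\rho(s)=\hat H(s)+h\circ\phi$, which is constant on $\hat\Sigma(s)$. Differentiating $\mu^h(\hat\Omega(s))$ gives $\frac{d}{ds}\mu^h(\hat\Omega(s))=\rho(s)\int_{\hat\Sigma(s)}\partial_su_s\,(\dots)$. The key inequality is the ODE-type estimate $\rho'(s)\le C|\rho(s)|$, which I expect to be the main obstacle. It follows from the evolution formula $-\rho'=\Delta\psi+(\operatorname{Ric}+|A|^2+\nu(h\circ\phi))\psi$ with $\psi=\partial_su_s\,\langle\partial_s,\nu_s\rangle>0$. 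Divide by $\psi$, integrate, and use the Gauss–Bonnet argument of Step 1 on each torus $\hat\Sigma(s)$; here $Q\ge\Lambda^2$ uses $|\nabla\phi|\le 2\Lambda/\pi$. Since $\rho(0)=0$, Grönwall gives $\rho(s)\le0$ for $s>0$ and $\rho(s)\ge0$ for $s<0$ (with the orientation chosen appropriately). Hence $\mu^h(\hat\Omega(s))\le\mu^h(\hat\Omega)$. Because $\hat\Omega$ is a local minimizer and the modification is supported away from $\partial\hat\Omega\setminus\hat\Sigma$ by hypothesis, equality holds. Any competitor for $\hat\Omega(s)$ inside a compact $U$ is then also a competitor for $\hat\Omega$ with the same energy difference, so $\hat\Omega(s)$ is a local minimizer as well.
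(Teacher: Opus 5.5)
\textbf{Main gap (Step 1).} You never show that the limit component $\hat\Sigma$ is closed or that it keeps positive genus, yet you immediately test stability with $\varphi\equiv1$ and apply Gauss--Bonnet to it. This is exactly where the proof hinges. Smooth local convergence in the noncompact $M$ does not prevent parts of $\hat\Sigma_k$ from drifting toward $\{\phi\to1\}$, or handles from escaping. So a priori the limit through $\mathcal K=\phi^{-1}([0,c_0])$ could be noncompact, or a sphere.

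``The homology class is preserved'' does not rescue this, because only $\partial\hat\Omega_k$ as a whole represents $\phi^*([\mathrm{pt}])$, not the particular component $\hat\Sigma_k$. The fact that would give compactness, $\phi\equiv t_0$ on $\hat\Sigma$, you derive only at the end from the equality analysis, which already presupposes compactness. The order is reversed.

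The paper fixes this with the quantitative content of Lemma \ref{intersect}. Since $h_k'<0$ and $\chi(\hat\Sigma_k)\le0$, stability with $\varphi\equiv1$ gives
\[
\int_{\hat\Sigma_k}|h_k'\circ\phi|\Bigl(\frac{2\Lambda}{\pi}-\langle\nabla\phi,\nu_k\rangle\Bigr)\le\int_{\hat\Sigma_k\cap\mathcal K}\bigl(\Lambda^2-Q_{\varepsilon_k}\circ\phi\bigr)\le C\varepsilon_k\,|\hat\Sigma_k\cap\mathcal K|.
\]
It also uses the uniform bound $|\hat\Sigma_k|\le\mu^{h_k}(\hat\Omega_k)\le|\Sigma|$ (from $h_k<0$), which you need for the convergence as well. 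Passing to the limit gives $\nabla\phi=\frac{2\Lambda}{\pi}\nu$ on $\hat\Sigma$ \emph{before} any Gauss--Bonnet. Hence the component through $\mathcal K$ lies in the compact level set $\phi^{-1}(t_0)$ with $t_0\le c_0$. Multiplicity-one graphical convergence then makes $\hat\Sigma_k$ a single graph over it for large $k$, which transfers genus $\ge1$. Only after this is your equality analysis legitimate.

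\textbf{Equality bookkeeping (wrong, but repairable).} The inequality $\operatorname{Ric}(\nu,\nu)+|A|^2\ge-\Lambda^2+H^2-2K$ does not come from $|A|^2\ge H^2/2$. It comes from the exact identity $\operatorname{Ric}(\nu,\nu)+|A|^2=\operatorname{Ric}(e_1,e_1)+\operatorname{Ric}(e_2,e_2)-2K+H^2$, together with the tangential Ricci bound. The identity follows from $\operatorname{Ric}(e_1,e_1)+\operatorname{Ric}(e_2,e_2)-\operatorname{Ric}(\nu,\nu)=2\sec_g(e_1,e_2)$ and Gauss, $K=\sec_g(e_1,e_2)+\det A$.

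Equality therefore yields $\operatorname{Ric}(X,X)=-\Lambda^2/2$ for every unit $X$ tangent to $\hat\Sigma$ directly. It says nothing about umbilicity or about $\operatorname{Ric}(\nu,\nu)$. Your claims that $A$ is umbilic and that $\operatorname{Ric}(\nu,\nu)=-\Lambda^2/2$ are false. The model leaves in Theorem \ref{teoA} have distinct principal curvatures $-\tanh(\frac{\pi}{4}-\rho)$ and $-\coth(\frac{\pi}{4}-\rho)$. Moreover, combined with $K=0$ and $H=|h\circ\phi|>\Lambda$, your claims would force $\operatorname{Ric}(e_1,e_1)+\operatorname{Ric}(e_2,e_2)=-\frac{\Lambda^2+H^2}{2}<-\Lambda^2$, contradicting the hypothesis. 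What you actually use later does follow from the correct identity: $K\equiv0$ via the constant first eigenfunction, $\langle\nabla\phi,\nu\rangle=\frac{2\Lambda}{\pi}$, and a vanishing Jacobi potential.

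\textbf{Steps 2 and 3.} Your Step 2 is the paper's argument.

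Step 3 is sound but takes a different route. You obtain $\rho'\le C|\rho|$ from the lapse equation: divide by $\psi$, integrate, and use $\chi(\hat\Sigma(s))=0$. The only new term is the cross term $-2\rho\,h\circ\phi$ in $H^2=(\rho-h\circ\phi)^2$. You then deduce $\mu^h(\hat\Omega(s))\le\mu^h(\hat\Omega)$, close with local minimality of $\hat\Omega$, and get $\rho\equiv0$ a posteriori.

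The paper instead first proves $\hat H(\tau)-h\circ\phi\equiv0$ by contradiction. It builds an auxiliary $\mu$-bubble for $\bar h_\delta$, with $\bar h_\delta^2+\frac{2\Lambda}{\pi}\bar h_\delta'=\Lambda^2+\delta$, in the slab between $\hat\Sigma$ and $\hat\Sigma(\tau)$. Its torus component violates Gauss--Bonnet, and the paper then defers to Zhu. Your version avoids the second existence and degree argument and is more self-contained.
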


\begin{proof}

We divide the proof into three parts corresponding to items a), b) and c).

\paragraph{Proof of Item a).}
For each $k$, let $\hat\Sigma_k$ be a connected component of $\partial\hat\Omega_k$ of genus $g(\hat\Sigma_k)\ge 1.$  Such a component exists since  $\phi\in\mathcal{F}_M^{(0)}$). 
Applying the second variation formula \eqref{second} to $\hat\Sigma_k$ with  test function $\varphi\equiv 1,$ we obtain
\begin{eqnarray*}
 \int_{\hat{\Sigma}_k} \bigl(-\Lambda^2 + (h_k \circ \phi)^2 + \nu_k(h_k \circ \phi)\bigr)\ d\mathcal{H}_g^2\le  4\pi\chi(\hat{\Sigma}_k),
\end{eqnarray*}
where $\nu_k$ is the outward unit normal to $\hat\Sigma_k,$ and $\chi(\hat \Sigma_k)$ denotes the Euler characteristic of $\hat \Sigma_k$.

By Lemma \ref{intersect}, for any $t\in[0,c_0]$ we have $h_k^2 + \frac{2\Lambda}{\pi}h_k' < \Lambda^2$, while for $t>c_0$ the opposite inequality holds. Since $$
|\langle\nabla\phi,\nu_k\rangle|\le\operatorname{Lip}\phi=2\Lambda/\pi\quad\mbox{ and }\quad h_k'<0,
$$ 
 it follows from Lemma \ref{intersect} that every  $\hat\Sigma_k$ intersects the compact set $$\mathcal K=\phi^{-1}\left(\left[0, c_0\right]\right).$$

Since each $h_k$ is negative,  we have 
$$
|\hat{\Sigma}_k| \leq \mu^{h_k}(\hat{\Omega}_k) \leq |\Sigma|,$$
which implies that the areas of $\hat{\Sigma}_k$ are uniformly bounded. Therefore, the curvature estimates for stable $h_k$-surfaces (see \cite[Theorem 3.6]{XZ}), together with elliptic regularity, imply that after passing to a subsequence the surfaces $\hat\Sigma_k$ converge smoothly and locally graphically to a smooth limit surface $\hat\Sigma$ with $\hat\Sigma\cap\mathcal{K}\neq\emptyset$.

Using the explicit expression for $h_k$ from Lemma 3.2, a direct computation gives
$$
\int_{\hat{\Sigma}_k} \left| \langle \nabla \phi, \nu_k \rangle - \frac{2\Lambda}{\pi} \right| |h_k' \circ \phi| \, d\mathcal{H}_g^2
\leq C \epsilon_k \, |\hat{\Sigma}_k \cap \mathcal{K}|,
$$
which tends to $0$ as $k \to \infty$. Hence, passing to the limit, we obtain
$$
\langle \nabla \phi, \nu \rangle = \frac{2\Lambda}{\pi},
$$
where $\nu$ is the limit of $\nu_k$.  Since $\operatorname{Lip}\phi = \tfrac{2\Lambda}{\pi}$, equality holds in Cauchy–Schwarz and therefore $\nabla\phi=\tfrac{2\Lambda}{\pi}\nu,$ which implies that 
$\nabla_{\hat{\Sigma}}\phi = 0 $  and, thus, for some $t_0 \in [0,c_0],$ $\hat{\Sigma}$ has a closed component $\hat{\Sigma}'$ contained in a level set $\phi^{-1}(t_0)$.  

The $L^1_{\mathrm{loc}}$ convergence $\chi_{\hat\Omega_k}\to\chi_{\hat\Omega}$ implies $\hat\Sigma\subset\partial\hat\Omega$, and therefore $\hat\Sigma'$  is a connected component of $\partial\hat\Omega$. 
Therefore, we may 
choose an open neighborhood $V \subset M$ of $\hat{\Sigma}'$ such that
$$
\hat{\Sigma} \cap V = \hat{\Sigma}'.
$$
By the smooth local convergence $\hat{\Sigma}_k \to \hat{\Sigma}$ and arguing as in \cite[Proposition B.1]{MN}, for $k$ sufficiently large the intersection
$\hat{\Sigma}_k \cap V$ can be written as a finite union of graphs over
$\hat{\Sigma}'$. Since each $\hat{\Sigma}_k$ is connected and
$\hat{\Sigma}_k $ converges to $\hat{\Sigma}'$, the graphical decomposition consists of a single sheet for $k$ sufficiently large. In particular, for $k$ large, $\hat{\Sigma}_k$ is a
graph over $\hat{\Sigma}'$ in $V$. Hence, $\hat \Sigma=\hat \Sigma'$ has genus $g(\hat \Sigma)\geq1.$

Finally, applying the second variation inequality to the limit surface $\hat{\Sigma}$, we get
\begin{eqnarray*}
0&\leq&\int_{\hat{\Sigma}} \bigl(-\Lambda^2 + (h \circ \phi)^2 + \nu(h \circ \phi)\bigr)\,d\mathcal{H}_g^2\le  4\pi\chi(\hat{\Sigma}),
\end{eqnarray*}
Then, we see that all inequalities become equalities. In particular,  $\hat{\Sigma}$ has genus one and   $\mbox{Ric}_g(X,X)=-\Lambda^2/2$ for any unit vector $X\in T_p\hat{\Sigma},$ $d\phi(\nu) = 2\Lambda/\pi$ and  $\hat\Sigma$ is flat. This completes the proof of Item a).

Item b) follows from a standard application of the implicit function theorem, provided  the stability operator (or Jacobi operator) associated to the stability inequality \eqref{second} reduces to $L=-\Delta_{\hat \Sigma},$ see, for instance, \cite{BBN,N}

It remains to prove Item c). Let $s \in [0,\delta)$. We first show that $\hat{H}(\tau) - h \circ \phi \equiv 0$ for every $0 < \tau < s$. Suppose, to the contrary, that for some $\tau$ this constant is positive (a negative value is impossible, since $\hat{\Omega}$ is a local minimizer and one could deform inward to decrease energy).

Consider the ordinary differential equation
$$
\bar h_\delta^2 + \frac{2\Lambda}{\pi}\bar h_\delta' = \Lambda^2 + \delta,\qquad \bar h_\delta(t_0) = h(t_0)+\delta,
$$
with $\delta>0$ small. 
Choosing $\delta$ sufficiently small, we have $\hat H(0)-\bar h_\delta\circ\phi= h\circ\phi-\bar h_\delta\circ\phi> 0$ on $\hat\Sigma$ and $\hat H(\tau) > \bar h_\delta\circ\phi$ on $\hat\Sigma(\tau)$. 
Now consider the functional $\mu^{\bar h_\delta}$ in the region bounded by $\hat\Sigma$ and $\hat\Sigma(\tau)$. 

By Proposition \ref{exist} there exists a smooth minimizer $\tilde\Omega$ with $\hat\Sigma\subset\partial\tilde\Omega$. 
Because the projection of $\partial\tilde\Omega\setminus\hat\Sigma$ onto $\hat\Sigma$ has degree one,  $\partial\tilde\Omega$ must contain a component $\tilde\Sigma$ of genus one. Applying the second variation to $\tilde\Sigma$ yields
$$
0 \ge 4\pi\chi(\tilde\Sigma) \ge  \delta\,\operatorname{area}(\tilde\Sigma) > 0,
$$
a contradiction. 
Hence $\tilde H(\tau) -h\circ\phi = 0$ for all $\tau\in(0,s)$.

Now the remainder of the argument follows as in \cite[Proposition 3.5]{Zhu2}.  
\end{proof}

\subsection{Proof of the Ricci curvature results and its consequences}

\begin{proof}[Proof of Theorem \ref{teoA}]
The lower bound $\operatorname{Lip}\phi \ge 4/\pi$ follows directly from Item  a) of Proposition \ref{key_estimate} with $\Lambda=2$. Assume now that equality holds, namely $\operatorname{Lip}\phi = 4/\pi.$ 

By Proposition \ref{classification} there exists a connected component  $\hat\Sigma\subset\partial\hat\Omega$ which is a flat torus contained in a level set of $\phi$. Moreover, $\langle\nabla\phi,\nu\rangle = 4/\pi$ along $\hat\Sigma$, and $\operatorname{Ric}_g(X,X)=-2$ for every unit vector $X$ tangent to $\hat\Sigma$. 
Furthermore, there exists $\delta>0$ and a smooth foliation
$\{\hat\Sigma(s)\}_{s\in(-\delta,\delta)}$ of a neighborhoodof $\hat\Sigma=\hat\Sigma(0)$ such that each $\hat\Sigma(s)$ is a graph over $\hat\Sigma$ with constant function $ \hat H(s) - h\circ\phi$ on $\hat \Sigma(s)$,
and each associated region $\hat\Omega(s)$ is a
local minimizer of the functional $\mu^h$.

Let $f$ denote the lapse function of the foliation, so that locally the metric
can be written as
$$
g =  f(s)^2\,ds^2 + \hat g(s),
$$
where $g_s$ is the induced metric on $\hat\Sigma_s$.
Since equality holds in the stability inequality, the Jacobi operator of
$\hat\Sigma(s)$ reduces to $L=-\Delta_{\hat\Sigma(s)}.$ The lapse function satisfies $Lf=0.$ Because each leaf $\hat\Sigma_s$ is compact, it follows that $f$ is constant on
$\hat\Sigma_s$. Using the normalization condition from
Proposition \ref{classification}, we conclude that $f\equiv 1$. 
Hence the foliation is given by unit-speed
normal geodesics and $g=ds^2+g_s.$

If $\hat\Sigma\cap\partial M\neq\emptyset$, then the maximum principle implies
that $\hat\Sigma=\partial M$, and therefore the splitting holds on a collar
neighborhood of the boundary. Otherwise, $\hat\Sigma$ lies in the interior and
the splitting holds on a tubular neighborhood of $\hat\Sigma$. The maximal neighborhood of $\hat\Sigma$ on which the metric splits as $(\hat \Sigma\times I , ds^2 + \hat g(t))$ is open and closed in $M.$ By connectedness, $I$ must be of the form $[0,\varepsilon).$

Using item c) of Proposition \ref{classification} and arguing exactly as in \cite[Theorem 1.4]{Zhu2}, we can extend the foliation to a maximal family
$\{\hat\Sigma(s)\}_{0\le s<b}$
such that 
$
 \lim _{s \rightarrow b^{-}} \phi(s)=1.
$

Because $\langle\nabla\phi,\nu\rangle = 4/\pi$ and $\phi$ is constant on each $\hat\Sigma(s)$, we can set $\rho = \pi\phi/4$. 
Then $\rho$ is a distance function (since $|\nabla\rho|=1$) and the level sets $\{\rho = \text{constant}\}$ coincide with the foliation $\{\hat\Sigma(s)\}$. 
Hence we can write
$$
M = \hat\Sigma \times [0,\tfrac{\pi}{4}),\qquad g = \bar g(\rho)+ d\rho^2,
$$
where $\bar g(\rho)$ is the induced metric on the level set $\{\rho=\text{constant}\}$.

We now   show that $g$ has constant sectional curvature $-1$. 

From Proposition \ref{classification} we know that $\operatorname{Ric}_g(X,X) = -2$ for every unit vector $X$ tangent to the level sets. 
Let $\nu=\partial_\rho$,  and let $\{e_1, e_2\}$ be a local orthonormal frame  tangent to the level sets. Fix $p\in M$, since $\mbox{Ric}_g(X,X)=-2$ for every unit vector $X\in T_p\hat\Sigma$, we can show that $R(X,\partial_\rho,\partial_\rho,X)$ does not depend on $X\in T_p\hat\Sigma$ orthogonal to $\partial_p$. Let $W\in T_p\hat\Sigma$ orthogonal both to $X$ and $\partial_\rho$. Then
$$
2R(X ,\partial_\rho,\partial_\rho,X)=R(X ,\partial_\rho,\partial_\rho,X)+R(W ,\partial_\rho,\partial_\rho,W)= \mbox{Ric}_g(\partial_\rho,\partial_\rho)\geq-2.
$$

Consider now the Riccati equation
$$
\partial_\rho\nabla^2\rho+(\nabla^2\rho)^2+R(\cdot,\nabla\rho)\nabla\rho=0,
$$
where $\nabla^2\rho$ is an endomorphism of $T^\star M^3$. Denote by $\lambda(s)$ the least eigenvalue of $\nabla^2\rho$ at an integral curve of $\gamma(s).$ Then 
\begin{equation}\label{EDO}
\lambda_i'(s)+\lambda^2_i(s)\leq 1.    
\end{equation}
Hence for $s\in[0,\pi/4)$, 
\begin{equation}\label{compar}
   \lambda(s)\leq\frac{a\coth (s)-1}{\coth (s)-a}, 
\end{equation}
where $a=\lambda(0).$  Since the foliation exists on the maximal interval $[0,\pi/4),$  we can conclude that one principal curvatures of $\hat\Sigma$ is $-\coth(\pi/4)$. Moreover, the fact that $\lambda_1(0)+\lambda_2(0) = -2\coth(\pi/2),$ we have that the other principal curvature is $-\tanh(\pi/4).$ Note that we also have equality in \eqref{compar}, and thus a principal curvature of  $\hat\Sigma_s=\{\rho=s\}$ is $-\coth\left(\frac{\pi}{4} - s\right)$.  One can prove that the largest eigenvalue of $\nabla^2\rho$ is 
$$
\lambda_{max}(s) \leq \tanh\left(\frac{\pi}{4} -s\right).
$$
Using that each Gauss curvature of $\Sigma_s$ is zero and Gauss equation we conclude that equality holds and, as a consequence, the sectional curvature of $g$ must be $-1.$

Because the torus $\hat\Sigma_\rho$ is flat, we have $\lambda_1(\rho)\lambda_2(\rho) = 1$ and $\lambda_1(\rho)+\lambda_2(\rho) = -2\coth(\pi/2 - 2\rho).$ Solving these equations yields

$$\lambda_1(\rho) = -\tanh\!\left(\frac{\pi}{4} -\rho\right),\qquad
\lambda_2 (\rho) = -\coth\!\left(\frac{\pi}{4} -\rho\right).$$
Since $\partial_\rho g_\rho = 2A_\rho,$ where $A_\rho=\nabla^2\rho|_{T\hat{\Sigma}(\rho)}$, the metric evolves according to $\partial_\rho g_\rho(e_i,e_i) = 2\lambda_i(\rho)g_\rho(e_i,e_i).$ Choosing local principal coordinates $(s_1,s_2)$, integration yields
$$
g_\rho = e^{2\int \lambda_1(\rho)\, d\rho} \, ds_1^2
+ e^{2\int \lambda_2(\rho)\, d\rho} \, ds_2^2.
$$
A direct computation yields
$$
g = d\rho^2
+ \cosh^2\!\left(\tfrac{\pi}{4} - \rho\right)\, ds_1^2
+ \sinh^2\!\left(\tfrac{\pi}{4} - \rho\right)\, ds_2^2.
$$

Since each leaf $\Sigma_\rho$ is a flat torus, its universal covering is
$\mathbb R^2$. Passing to the universal covering, we obtain
$$
M_0=\mathbb R^2\times[0,\tfrac{\pi}{4})
$$
equipped with the metric above. The deck transformations act by Euclidean
isometries on the $\mathbb R^2$-factor, preserving the warped product
structure. Hence $(M,g)$ is isometric to a quotient of this doubly warped
product.

Finally, since $\rho=\pi/4\phi,$ the function $\phi$ coincides, up to scaling and translation, with the signed distance function. This completes the proof.
\end{proof}

\begin{proof}[Proof of Theorem \ref{teoB}]
   The result follows immediately from item $b)$ of Proposition \ref{key_estimate} with $\Lambda=\sqrt{4-2c}.$
\end{proof}

The following result can be found in \cite{Zhu} and \cite{CZ}.

\begin{lemma}\label{Zhu_lemma}
Let $(M,g)$ be a smooth Riemannian manifold with boundary
$\partial M = \partial_- M \cup \partial_+ M$.
Assume that
$$
\textrm{width}(M,g)>2l
$$
Then there exists a smooth surjective map
$$
\phi : M \to [-l,l]
$$
such that
\begin{enumerate}
  \item $\phi^{-1}(-l) = \partial_- M$,
  \item $\phi^{-1}(l) = \partial_+ M$,
  \item $|d\phi| \le c < 1$ on $M$ for some constant $c$.
\end{enumerate}
\end{lemma}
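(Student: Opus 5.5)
We need to prove Lemma \ref{Zhu_lemma}: if width$(M,g)>2l$, there is a smooth surjective $\phi:M\to[-l,l]$ with $\phi^{-1}(\mp l)=\partial_\mp M$ and $|d\phi|\le c<1$.

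\emph{Plan.} Start from the distance function $d_-(x)=\operatorname{dist}_g(x,\partial_-M)$. It is $1$-Lipschitz, vanishes exactly on $\partial_-M$, and satisfies $d_-\ge W:=\operatorname{width}(M,g)>2l$ on $\partial_+M$. Fix a number $L$ with $2l<L<W$. Set
$$
\psi(x)=-l+\frac{2l}{L}\min\{d_-(x),L\}.
$$
Then $\psi$ is Lipschitz with constant $2l/L<1$. We have $\psi^{-1}(-l)=\partial_-M$, and $\psi\equiv l$ on the set $\{d_-\ge L\}$. That set contains a neighbourhood of $\partial_+M$, since $d_->L$ near $\partial_+M$ by compactness.

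The defects are that $\psi$ is not smooth and that $\psi^{-1}(l)$ is too large. Both will be repaired by smoothing and a boundary modification, using the slack in the Lipschitz constant.

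\emph{Step 1 (smoothing).} Smooth $\psi$ by a standard partition-of-unity mollification in charts; on a compact manifold this is Greene–Wu type approximation. This produces a smooth $\tilde\psi$ with $|\tilde\psi-\psi|<\eta$ and $|d\tilde\psi|\le 2l/L+\eta<1$.

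To preserve the boundary behaviour, first replace $d_-$ near $\partial_-M$ by the smooth function $t$ (distance in a collar $\partial_-M\times[0,\tau)$). This function is smooth with gradient of norm $1$ there. Interpolate between $t$ and the mollified function with a cutoff; the resulting gradient error is $O(\eta/\tau)$ and is absorbed by choosing $\eta\ll\tau$. Near $\partial_+M$, $\psi$ is already the constant $l$, so no modification is needed there.

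\emph{Step 2 (exact level sets).} After Step 1, $\phi_0$ equals $-l+\frac{2l}{L}t$ near $\partial_-M$, so $\phi_0^{-1}(-l)=\partial_-M$ exactly. Near $\partial_+M$, however, $\phi_0\equiv l$ on an open set, which is not allowed. Fix this by taking a collar coordinate $s$ = distance to $\partial_+M$ on $\partial_+M\times[0,\tau)$ and setting
$$
\phi=\phi_0-\beta(s),
$$
where $\beta$ is smooth, $\beta(0)=0$, $\beta>0$ for $s>0$, $\beta$ is constant for $s\ge\tau$, and $|\beta'|\le\eta$. Then $\phi<l$ away from $\partial_+M$, $\phi=l$ exactly on $\partial_+M$, and $|d\phi|\le 2l/L+O(\eta)$.

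Rescale the range by a factor slightly less than $1$ around $-l$ if necessary, so that $\phi$ stays in $[-l,l]$. Surjectivity then follows from connectedness of $M$ and the intermediate value theorem. This yields $c=2l/L+O(\eta)<1$.

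\emph{Main obstacle.} The delicate step is the smoothing near the boundaries. The preimages must be exactly $\partial_\pm M$ while the gradient bound stays strictly below $1$. This is why collar coordinates are used near the boundary and the smoothing is done only in the interior, with the strict inequality $L<W$ providing the slack needed to absorb the cutoff errors.
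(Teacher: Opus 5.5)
The paper does not prove this lemma; it only refers to \cite{Zhu} and \cite{CZ}, where such a $\phi$ is obtained by smoothing a suitably rescaled distance function. Your plan follows that standard route, and the construction of $\psi$ and the smoothing in Step~1 are fine.

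The final normalization in Step~2, however, fails as written. Your $\beta$ equals the constant $\beta_0:=\beta(\tau)>0$ everywhere outside the $\partial_+M$-collar, in particular on $\partial_-M$. So $\phi=\phi_0-\beta$ equals $-l-\beta_0$ on $\partial_-M$: property (1) is lost and $\phi$ leaves $[-l,l]$. A contraction ``around $-l$'', $y\mapsto -l+\lambda(y+l)$ with $\lambda<1$, cannot repair this. It sends $\partial_-M$ to $-l-\lambda\beta_0<-l$ and $\partial_+M$ to $-l+2\lambda l<l$, so both (1) and (2) fail.

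The correct repair is to contract about $+l$ instead. Compose with $A(y)=l+\frac{2l}{2l+\beta_0}(y-l)$: it fixes $l$, maps $-l-\beta_0$ to $-l$, and has slope $<1$, so the gradient bound survives.

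For $(A\circ\phi)^{-1}(-l)=\partial_-M$ you then need $\phi>-l-\beta_0$ off $\partial_-M$, i.e.\ $\phi_0>-l$ off $\partial_-M$. Your Step~1 only checks this inside the collar, where $\phi_0=-l+\frac{2l}{L}t$. It does hold elsewhere, but needs one more line. Where the cutoff is not identically $1$, you have $t\ge\tau'>0$, hence $\psi\ge -l+2l\tau'/L$, while $|\phi_0-\psi|\le\eta$. So you must choose $\eta<2l\tau'/L$ in addition to $\eta\ll\tau$.

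With the contraction centered at $+l$ and this extra smallness condition on $\eta$, the argument is complete and gives $c=\frac{2l}{2l+\beta_0}\bigl(2l/L+O(\eta)\bigr)<1$.
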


\begin{proof}[Proof of Corollary \ref{princ_0}]
 Assume by contradiction that   
 $$
 \textrm{width}(M,g)> \frac{\pi}{2\sqrt{4-2c}}.
 $$ Set $l=\frac{\pi}{4\sqrt{4-2c}}.$  It follows from Lemma \ref{Zhu_lemma} that there exists a smooth surjective 
function $\tilde \phi : M \to \left[-l,l\right],$  with   $\operatorname{Lip}\tilde\phi < 1$ and such that 
$\tilde\phi^{-1}(-l) =\partial_- M$ and $\tilde\phi^{-1}(l) = \partial_+ M$. 
Now define
$$
\phi = \frac{2\sqrt{4-2c}}{\pi}(\tilde\phi+l).
$$ 
A direct verification shows that $\phi : \hat M \to [0,1]$ is surjective, with 
$\phi^{-1}(0) = \partial_- M$ and $\phi^{-1}(1) = \partial_+ M$. 
Moreover,
$$
\operatorname{Lip} \phi 
< \frac{2\sqrt{4 - 2c}}{\pi}.
$$
Let $\hat{M}$ be the half-open band obtained from $M$ by removing $\partial_+ M$. Considering the restriction  $\phi : \hat{M} \to [0,1),$ it follows from Proposition \ref{key_estimate} that  
$$
|\partial_- M| > \frac{8\pi(g(\Sigma) - 1)}{4 - \Lambda^2}
= \frac{4\pi(g(\Sigma) - 1)}{c}.$$
This contradicts the assumed area bound,  and we conclude that
$$
\operatorname{width}(M,g) \leq \frac{\pi}{2\sqrt{4 - 2c}}.
$$
\end{proof}

The proof of Corollary \ref{princ_00} is analogous to that of the previous corollary and will be omitted.

\section{Scalar curvature: Complete case}\label{scalar}

The bulk of this section is dedicated to proving results involving the scalar curvature, culminating in a rigidity theorem under the assumptions of bounded geometry and scalar curvature bounded below. We begin with the following proposition

\begin{proposition}

Let $\Sigma$ be a closed surface of genus $g(\Sigma) \geq 2$ and assume that $M= \Sigma\times[-1,1]$ admits a metric $g$ with scalar curvature $R_g \geq -6.$ Given $c \in (0,2),$ suppose that $H^{\partial_- M}_g\geq c\cdot\coth(\tfrac{\pi}{2})$ and the area of $\partial_-M$ satisfies 
    $$
    |\partial_-M|\leq \frac{4\pi(g(\Sigma)-1)}{3-\frac{3}{4}c^2}.
    $$
Then
$$\textrm{width}(M,g)\leq\frac{\pi}{2c}.$$
\end{proposition}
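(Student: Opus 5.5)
The plan is to mimic the proof of Corollary \ref{princ_0}, with the Ricci curvature input in Proposition \ref{key_estimate} replaced by the Schoen--Yau rearrangement of the stability inequality.

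\emph{Reduction to a Lipschitz function.} Argue by contradiction and assume $\textrm{width}(M,g)>\frac{\pi}{2c}$. Apply Lemma \ref{Zhu_lemma} with $l=\frac{\pi}{4c}$ and rescale as in the proof of Corollary \ref{princ_0}. This produces a smooth surjective $\phi:M\to[0,1]$ with $\phi^{-1}(0)=\partial_-M$, $\phi^{-1}(1)=\partial_+M$ and $\operatorname{Lip}\phi<\frac{2c}{\pi}$. Now pass to the half-open band $\hat M=M\setminus\partial_+M$. Since $H_2(\Sigma\times[-1,1])$ is generated by $[\Sigma]$ and $g(\Sigma)\ge2$, the function $\phi$ satisfies the $(\star)_{g(\Sigma)-1}$-condition. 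Take
$$h(t)=-c\coth\Bigl(\tfrac{\pi}{2}(1-\epsilon t)\Bigr)$$
with $\epsilon>1$ close to $1$ and $1/\epsilon$ a regular value of $\phi$. The boundary hypothesis $H\ge c\coth(\pi/2)$ is exactly the barrier condition at $\partial_-M$, and $h\to-\infty$ at the other end. Proposition \ref{exist} then gives a smooth minimizer $\hat\Omega$. Because $\partial\hat\Omega$ is homologous to $\partial_-M$, it has a component $\hat\Sigma$ with $g(\hat\Sigma)\ge g(\Sigma)\ge2$.

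\emph{Stability with scalar curvature.} Use the second variation \eqref{second} with $\varphi\equiv1$, the Gauss equation in the form
$$\operatorname{Ric}(\nu,\nu)+|A|^2=\tfrac12\bigl(R_g-2K_{\hat\Sigma}+|A|^2+H^2\bigr),$$
the inequality $|A|^2\ge H^2/2$, and $H=-h\circ\phi$. Together these give
$$2\pi\chi(\hat\Sigma)\ \ge\ \int_{\hat\Sigma}\Bigl(-3+\tfrac34(h\circ\phi)^2+\nu(h\circ\phi)\Bigr).$$
Expanding $\coth^2=1+\operatorname{csch}^2$ gives
$$\tfrac34 h^2+\nu(h)\ \ge\ \tfrac34c^2+\Bigl(\tfrac34c^2-\tfrac{\pi}{2}c\,\epsilon\operatorname{Lip}\phi\Bigr)\operatorname{csch}^2.$$
If the bracket is positive, then $-4\pi(g(\hat\Sigma)-1)>(-3+\tfrac34c^2)|\hat\Sigma|$, i.e.
$$|\hat\Sigma|>\frac{4\pi(g(\Sigma)-1)}{3-\frac34c^2}.$$
The comparison $|\hat\Sigma|\le\mu(\hat\Omega)\le|\partial_-M|$, which holds because $h<0$, then contradicts the area hypothesis. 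The constant term $\tfrac34c^2$ reproduces exactly the area threshold in the statement, which is why I would use this $h$.

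\emph{Main obstacle.} The difficulty is the $\operatorname{csch}^2$ coefficient. With $\operatorname{Lip}\phi<2c/\pi$ the naive bound gives $\tfrac34c^2-c^2<0$, so the factor $\tfrac34$ lost to $|A|^2\ge H^2/2$ has to be recovered. My first attempt would keep a general test function $\varphi=u$ in \eqref{second}, and use the standard $\mu$-bubble trick with a warped or weighted functional (Chodosh--Li type). In that setting the term $\nu(h)$ is paired with $\tfrac{1}{2}h^2$ after completing a square in $|\nabla u|^2$, which is what restores the full coefficient in front of $h^2$ for the $\operatorname{csch}^2$ part.

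If this cannot be made to work, the fallback is to keep $\varphi\equiv1$ and instead rescale the profile to $h=-c\coth(\beta(1-\epsilon t))$. One would choose $\beta\ge\pi/2$ so the barrier still holds and the bracket becomes positive. The catch is that this route only yields the weaker width bound coming from $\operatorname{Lip}\phi<\tfrac{3c}{2\pi}$, namely $\textrm{width}(M,g)\le\frac{2\pi}{3c}$, rather than $\frac{\pi}{2c}$. I therefore expect this step to be the crux of the proof.
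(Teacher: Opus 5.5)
Your reduction via the width lemma with $l=\frac{\pi}{4c}$ and your passage to the half-open band match the paper's sketch, as do the genus count, the profile $h(t)=-c\coth\bigl(\tfrac{\pi}{2}(1-\epsilon t)\bigr)$, the Gauss-equation stability estimate and the area comparison. Your computation of the $\operatorname{csch}^2$ coefficient is also correct. The obstacle you flag is a genuine gap, and the paper does not close it either. The sketch asserts that $\tfrac{3c^2}{2}-c\pi\epsilon\operatorname{Lip}\phi$, which is twice your bracket, is positive ``since we may assume $\pi\epsilon\operatorname{Lip}\phi<2c$''. That assumption only gives $\tfrac{3c^2}{2}-c\pi\epsilon\operatorname{Lip}\phi>-\tfrac{c^2}{2}$; positivity needs $\pi\epsilon\operatorname{Lip}\phi<\tfrac{3c}{2}$. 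In effect, the statement's area threshold is calibrated to the scalar-curvature coefficient $\tfrac34$ in front of $h^2$, while the width $\frac{\pi}{2c}$ is calibrated to the Ricci coefficient $1$ used in the Ricci-curvature proposition. So neither your proposal nor the paper's argument proves $\textrm{width}(M,g)\le\frac{\pi}{2c}$.

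Your first repair, the warped $\mu$-bubble, cannot supply the missing factor. The $\tfrac34$ comes from the pointwise inequality $|A|^2\ge\tfrac12H^2$, which is sharp for umbilic bubbles. A weight $u$ only introduces terms in $\nu(\log u)$ and $\Delta_M u$ that a pointwise bound $R_g\ge-6$ does not control. More decisively, suppose some variant always produced a bubble of genus $\ge1$ with $\int_{\hat\Sigma}K\ge\int_{\hat\Sigma}\bigl(\tfrac12R_g+h^2+\nu(h\circ\phi)\bigr)$. Applied to torical bands with $R_g\ge6$ and $h=-\sqrt3\tan(\sqrt3 t)$, it would give width $\le\pi/\sqrt3<2\pi/3$. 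That contradicts the sharpness of Gromov's bound, which is attained in the limit by $dt^2+\cos^{4/3}(\tfrac{3t}{2})(dx^2+dy^2)$.

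Your fallback is correct, and it is what the method actually proves: $\textrm{width}(M,g)\le\frac{2\pi}{3c}$ under the stated hypotheses. Equivalently, you do get $\frac{\pi}{2c}$ if you strengthen the barrier to $H^{\partial_-M}_g\ge c\coth(\tfrac{3\pi}{8})$ and use $h(t)=-c\coth\bigl(\tfrac{3\pi}{8}(1-\epsilon t)\bigr)$. Its bracket $\tfrac34c^2-\tfrac{3\pi}{8}c\epsilon\operatorname{Lip}\phi$ is positive exactly when $\operatorname{Lip}\phi<\tfrac{2c}{\pi\epsilon}$. The statement as written would need an ingredient beyond this $\mu$-bubble argument.
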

    
\begin{proof}[Sketch of the proof]
    Since the proof follows the same line as Proposition \ref{key_estimate}, we merely sketch the proof. Assume by contradiction that  there exists a smooth proper surjective function $\phi : M \to [0,1)$ such that 
$
\operatorname{Lip}\phi < \frac{2c}{\pi}.
$
Define 
$h:[0,1/\epsilon)\to\mathbb R$  by
$$
h(t)=-c\cdot\coth \left(\frac{\pi}{2}(1-\epsilon t)\right),
$$
where $\epsilon>1$ will be fixed later. Again, it follows from the second variation of the corresponding prescribed mean curvature functional, and the Gauss equation
$$
\int_{\hat{\Sigma}} 2K_{\hat\Sigma} \, d\mathcal{H}_g^2
\geq \int_{\hat{\Sigma}} -6+\frac{3c^2}{2} + \left(\frac{3c^2}{2}-c\pi\epsilon \operatorname{Lip}\phi\right)
\mbox{csch}^2 \left(\frac{\pi}{2}(1-\epsilon\phi)\right)\, d\mathcal{H}_g^2.
$$
Since we may assume
$
\pi \epsilon \operatorname{Lip}\phi < 2c,
$
it follows that
$$
|\partial_- M| > \frac{8\pi (\mathfrak g-1)}{6-\frac{3}{2}c^2},
$$
which is a contradiction. Arguing as in the proof of Corollary \ref{princ_0}, we can conclude that 
$$\text{width}(M,g)\leq \frac{\pi}{2c}.$$
\end{proof}

Although weaker than the bound in Theorem \ref{teoB}, it may be rigid as $c$ goes to $0$. However, to obtain a rigidity result in the scalar curvature setting we need additional control on the geometry, namely  bounded geometry (uniform bounds on the curvature and injectivity radius). 

The following theorem plays a crucial role in the proof.

\begin{theorem}[K. Frensel\cite{K}]\label{Katia}
    Let $M^m$ be a complete, noncompact manifold and let $x:M^m\to N^n$ be an immersion with mean curvature vector field bounded in norm. Assume that $N$ has bounded geometry. Then the volume of $M$ in the induced metric is infinite.
\end{theorem}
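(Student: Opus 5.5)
The plan is to prove Theorem \ref{Katia} by a uniform lower bound on the volume of small intrinsic balls in $M$, obtained from the monotonicity formula. Combined with the existence of infinitely many pairwise disjoint such balls, this gives infinite volume. Throughout, set $H_0:=\sup_M|\vec H|<\infty$, and let $\Lambda, i_0$ be the bounded geometry constants of $N$.

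\emph{Step 1 (uniform monotonicity in $N$).} Because $N$ has bounded geometry, there is $r_0=r_0(\Lambda,i_0)>0$ with the following property. For every $q\in N$ the geodesic ball $B^N(q,r_0)$ is a normal coordinate ball, and $\rho=\operatorname{dist}_N(q,\cdot)$ satisfies the Hessian comparison $\nabla^2\tfrac12\rho^2\ge(1-C_1\rho)g$ there, with $C_1$ depending only on $\Lambda$. Take a piece $C$ of immersed submanifold that is properly immersed in $B^N(q,r)$, $r\le r_0$. Testing the first variation formula with the vector field $\rho\nabla\rho$, cut off radially, in the standard way gives a monotonicity statement: the quantity
$$
e^{C_2 s}\,s^{-m}\,\operatorname{vol}\bigl(C\cap x^{-1}(B^N(q,s))\bigr)
$$
is nondecreasing in $s\in(0,r]$, with $C_2=C_2(\Lambda,H_0,m)$. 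If $q=x(p)$ with $p\in C$, then the limit as $s\to0$ is at least $\omega_m$, since the immersion is locally an embedded sheet through $p$. Consequently $\operatorname{vol}(C)\ge e^{-C_2 r_0}\omega_m r^m$.

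\emph{Step 2 (properness via completeness).} Fix $p\in M$ and $r\le r_0$, and let $C$ be the connected component of $x^{-1}(B^N(x(p),r))$ containing $p$. Extrinsic distance is at most intrinsic distance, so we only have $B^M(p,r)\subset C$. The inclusion we need goes the other way, and I will handle it as follows. Apply monotonicity to the component $C'$ of $x^{-1}(B^N(x(p),r))\cap B^M(p,R)$ containing $p$, for some $R>r$. By Hopf--Rinow, $\overline{B^M(p,R)}$ is compact, so there are two alternatives: either $C'$ is properly immersed into $B^N(x(p),r)$, or $C'$ reaches $\partial B^M(p,R)$. In the second case $C'$ contains a curve of length at least $R-r$, and we use this curve to place the disjoint balls below.

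The simpler route I would actually take is to apply monotonicity directly to $x|_{B^M(p,r)}$. Its boundary $\partial B^M(p,r)$ maps outside $B^N(x(p),s)$ only if it is far away extrinsically, so this needs more care. Instead, replace the extrinsic ball by the intrinsic one and run monotonicity with the extrinsic distance function restricted to the compact set $\overline{B^M(p,r)}$. The boundary term in the first variation has a favorable sign as long as $x(\partial B^M(p,r))$ stays outside $B^N(x(p),s)$. If it does not, then some boundary point lies within extrinsic distance $s$ of $x(p)$, and the estimate has to be iterated on a slightly smaller intrinsic radius.

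The outcome of Step 2 should be a uniform bound $\operatorname{vol}(B^M(p,r_1))\ge c_0>0$ for all $p\in M$, where $r_1$ and $c_0$ depend only on $m,\Lambda,i_0,H_0$.

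\emph{Step 3 (infinitely many disjoint balls).} Since $M$ is complete and noncompact, it contains a ray $\gamma:[0,\infty)\to M$. The points $p_i=\gamma(3ir_1)$ satisfy $d_M(p_i,p_j)\ge 3r_1$ for $i\ne j$, so the balls $B^M(p_i,r_1)$ are pairwise disjoint. Therefore
$$
\operatorname{vol}(M)\ge\sum_i\operatorname{vol}(B^M(p_i,r_1))\ge\sum_i c_0=\infty.
$$

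The main obstacle is Step 2. Monotonicity is naturally an extrinsic statement, but disjointness is only available for intrinsic balls. The difficulty is to make sure the boundary of the relevant piece leaves the extrinsic ball, uniformly in $p$. This is exactly where completeness of $M$, together with the uniform radius $r_0$ from bounded geometry, must be used.
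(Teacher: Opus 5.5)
Your architecture matches the idea the paper indicates after citing Frensel: a uniform lower bound for the volume of small intrinsic balls, then infinitely many disjoint balls along a ray. Steps 1 and 3 are fine. \textbf{The gap is Step 2.} It carries the entire content of the proof, and you do not prove it; you only say what its outcome ``should be''. Neither of your routes closes it.
\begin{itemize}
\item \emph{First route.} The alternative where $C'$ is proper in $B^N(x(p),r)$ is harmless, since it gives $\operatorname{vol}B^M(p,R)\ge c\,r^m$. In the other alternative, where $C'$ reaches $\partial B^M(p,R)$, you obtain no volume at all. A curve of length $R-r$ has zero $m$-dimensional volume, and placing disjoint balls along it presupposes the very uniform intrinsic bound you are trying to prove, so the argument is circular.
\item \emph{Second route.} As soon as $x(\partial B^M(p,t))$ enters $B^N(x(p),s)$, the boundary term $\int_{\partial B^M(p,t)}\psi\,\rho\langle\nabla\rho,\eta\rangle$ has no sign. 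You give no reason why the proposed iteration should stop at an intrinsic radius bounded below independently of $p$. A bound on $|\vec H|$ gives no control on the second fundamental form, so intrinsically distant parts of $M$ can return arbitrarily close to $x(p)$ in $N$; nearly parallel minimal sheets joined by small necks are one example.
\end{itemize}

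\textbf{The missing ingredient} is an inequality valid for arbitrary compactly supported functions on $M$, so that intrinsic cutoffs can be used and no boundary term appears. The standard tool is the Hoffman--Spruck Sobolev inequality, i.e.\ the Michael--Simon inequality for submanifolds of a manifold with sectional curvature bounded above and injectivity radius bounded below. It holds for $h\in C^1_c(M)$ whose support has volume below a threshold $v_0$ depending only on these bounds. It is proved from a version of your Step 1 monotonicity weighted by $h$, plus a covering argument; this is exactly the device that turns extrinsic monotonicity into intrinsic information. The argument then runs as follows.
\begin{itemize}
\item Apply the inequality to $h_\epsilon=\min\{1,\epsilon^{-1}(t-d_M(p,\cdot))_+\}$. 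This has compact support by Hopf--Rinow, which is where completeness of $M$ enters.
\item Write $V(t)=\operatorname{vol}B^M(p,t)$, which is absolutely continuous by the coarea formula. Letting $\epsilon\to0$ gives $V^{(m-1)/m}\le c(m)\,(V'+H_0V)$ for a.e.\ $t\le r_1$, provided $V(r_1)<v_0$.
\item If moreover $V(r_1)^{1/m}\le(2cH_0)^{-1}$, then $(V^{1/m})'\ge 1/(2mc)$, so $V(t)\ge(t/2mc)^m$.
\end{itemize}
Hence $\operatorname{vol}B^M(p,r_1)\ge\min\{v_0,(2cH_0)^{-m},(r_1/2mc)^m\}$ for every $p\in M$, which is exactly the input your Step 3 needs.
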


The idea of a proof of the above result consists of showing a bound from below to the volume of a geodesic ball in M, which, together with the covering argument, gives the result. 

\begin{rem}
When $N$ has nonpositive sectional curvature, the aforementioned result  has been proved by Hoffman and Schoen (in an unpublished work).
\end{rem}

\subsection{Proof  of Theorem \ref{princ}}

We first prove the area estimate.

\begin{proposition}\label{prop_help}
       Let $(M^{3}, g)$ be an orientable complete Riemannian manifold with connected and mean-convex boundary $\partial M$. Assume that $(M,g)$ has  bounded geometry,  scalar curvature $R_g \geq -6$ and that  $H_2(M; \mathbb Z)$  is non‑trivial and contains no spherical and toroidal classes. 
Then
     \begin{equation}\label{ineq_thm3}
         |\partial M| \geq \frac{4\pi}{3} (\mathfrak g_0-1),
     \end{equation}
 where $ \mathfrak g_0$ denotes the minimum genus of a connected, closed, homologically nontrivial surface embedded in $M$.
\end{proposition}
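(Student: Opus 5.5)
We minimize area in the homology class of $\partial M$ and apply the stability inequality to a suitable component of the minimizer. The key point is that $\partial M$ is mean-convex, so it acts as a barrier for area-minimizing surfaces in its class.

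\textbf{Step 1: produce a minimizer.} We take a minimizing sequence $\Sigma_k$ of integral $2$-currents homologous to $[\partial M]$ with $|\Sigma_k|\to A:=\inf$. Then $A\le |\partial M|$, and $A>0$ because $[\partial M]\neq 0$. The class $[\partial M]$ is nontrivial: since $H_2(M;\mathbb Z)\neq 0$ and $M$ has a single boundary component, I would argue via the long exact sequence of the pair. If needed, I would instead minimize in an arbitrary nontrivial class $\alpha$ and compare with $\partial M$ using $\mu$-bubbles as in Section \ref{teoab}, with $h$ chosen so that $\partial M$ is the barrier.

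\textbf{Step 2: compactness (main obstacle).} Mass can escape to infinity, so the sequence need not converge. I would try first to show that pieces of $\Sigma_k$ escaping to infinity can be discarded without changing the homology class; otherwise they would carry area bounded below. The tools are bounded geometry, the monotonicity formula for stationary surfaces with uniform constants, and Theorem \ref{Katia}. Any limiting complete noncompact minimal surface arising far out would have infinite area, which contradicts $|\Sigma_k|\le |\partial M|$. A cleaner way to localize is the $\mu$-bubble approach with a weight $h\circ\phi$ that blows up at a large exhaustion level. We then let the level tend to infinity, using the area bound and Theorem \ref{Katia} to keep a component inside a fixed compact set, in the spirit of \cite{CCE,Zhu}. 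In either version we obtain a smooth embedded stable minimal surface $\hat\Sigma$ (or an $h$-surface with $h\to0$) that lies in the interior or coincides with $\partial M$ by the maximum principle, and satisfies $|\hat\Sigma|\le|\partial M|$.

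\textbf{Step 3: the stability estimate.} Some component $\hat\Sigma_1$ of $\hat\Sigma$ is homologically nontrivial. By hypothesis it is neither a sphere nor a torus, so its genus satisfies $g(\hat\Sigma_1)\ge \mathfrak g_0\ge2$. We plug $\varphi\equiv1$ into the second variation formula \eqref{second} and use the traced Gauss equation
$$\operatorname{Ric}(\nu,\nu)+|A|^2=\tfrac12(R_g-2K+|A|^2)\ge -3-K.$$
This gives
$$0\le \int_{\hat\Sigma_1}(3+K)=3|\hat\Sigma_1|+2\pi\chi(\hat\Sigma_1).$$
Hence
$$|\partial M|\ge|\hat\Sigma_1|\ge \tfrac{4\pi}{3}(g(\hat\Sigma_1)-1)\ge\tfrac{4\pi}{3}(\mathfrak g_0-1).$$
In the $\mu$-bubble version we get the same inequality with an error term depending on $h$. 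That error tends to zero as the exhausting parameter goes to infinity, exactly as in the computation in Proposition \ref{key_estimate} with $\Lambda\to0$.
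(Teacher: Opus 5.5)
Your Step 3 computation is correct, but Step 2 has a genuine gap: the closed stable minimal surface $\hat\Sigma$ you want to extract need not exist. Bounded geometry, the area bound and Theorem \ref{Katia} only say that \emph{if} the surfaces subconverge somewhere in $M$, the limit cannot be noncompact. They do nothing to stop a minimizing sequence, or a sequence of $\mu$-bubbles with $h\to0$, from leaving every compact set. So the localization ``in the spirit of \cite{CCE,Zhu}'' you invoke is not available from these ingredients alone. Here is a concrete example. Take $M=\Sigma\times[0,\infty)$ with $g(\Sigma)\ge2$ and metric $dt^2+(1+\delta e^{-t})^2g_{-1}$, where $g_{-1}$ is hyperbolic and $\delta>0$ is small. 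Then $R\ge -2-4\delta-2\delta^2\ge-6$, the geometry is bounded, $H_{\partial M}=2\delta/(1+\delta)>0$, and $\mathfrak g_0=g(\Sigma)$. Every slice has mean curvature of a fixed strict sign. Touching a closed surface at its minimum of $t$ therefore shows that $M$ contains no closed minimal surface at all, and $\partial M$ itself is not minimal. The infimum of area in the class of $\partial M$ is $4\pi(g(\Sigma)-1)$ and is not attained. So the conclusion of your Step 2 fails here, and so does the claim that a component can be kept in a fixed compact set. The inequality itself of course still holds.

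The repair is to drop the limit entirely; your last sentence already contains the whole proof, applied to each bubble rather than to a limit. This is what the paper does. Take a proper $\phi$ with $\phi^{-1}(0)=\partial M$ and $\operatorname{Lip}\phi<1$. Use the weight $h_\varepsilon=(1-\eta)h_\varepsilon^-$ with $h^-_\varepsilon(t)=-2\varepsilon\coth(\tfrac12-\tfrac32\varepsilon t)$. This function solves $\tfrac32 h^2+2h'=6\varepsilon^2$ and tends to $-\infty$ as $t\to 1/(3\varepsilon)$. The cutoff makes it vanish near $t=0$, so mere mean-convexity of $\partial M$ is a barrier. The weight of Proposition \ref{key_estimate} would instead need $H_{\partial M}\ge\Lambda\coth(\pi/2)>0$, and in the scalar setting its combination $h^2+\tfrac{2\Lambda}{\pi}h'$ must be replaced by $\tfrac32h^2+2h'$. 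For each $\varepsilon_k$, take the $\mu$-bubble $\Omega_k$ and a homologically nontrivial component $\Sigma_k$ of $\partial\Omega_k$, so that $g(\Sigma_k)\ge\mathfrak g_0$. Your Gauss-equation computation with $\varphi\equiv1$ then gives $(3+C\varepsilon_k)|\Sigma_k|\ge4\pi(\mathfrak g_0-1)$. Since $h_{\varepsilon_k}\le0$, comparison with the trivial competitor gives $|\Sigma_k|\le|\partial M|$. Letting $k\to\infty$ yields \eqref{ineq_thm3} with no compactness at all. The localization (every $\Sigma_k$ meets $\phi^{-1}([0,\tfrac16])$) and Theorem \ref{Katia} are needed only in the equality case, where the equality itself forces the bubbles to stay in a compact set. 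Finally, $[\partial M]\neq0$ comes from $M$ being connected and noncompact, not from $H_2(M;\mathbb Z)\neq0$. Indeed $H_3(M,\partial M;\mathbb Z)=0$, so $H_2(\partial M)\to H_2(M)$ is injective.
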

  \begin{proof}
  As in \cite{Zhu},  by smoothing the signed distance function, there is a smooth proper function $\phi: M\to [0, +\infty)$,  satisfying $\phi^{-1}(0) = \partial M$ and $\mbox{Lip }\phi < 1.$  

Following \cite[Lemma 2.3]{Zhu}. We define $
h_\varepsilon^-(t) : \left(-\infty, \tfrac{1}{3\varepsilon}\right)
\to \mathbb{R}$ given by $
h_\varepsilon^-(t) = -2\varepsilon \coth\!\left(-\frac{3}{2}\varepsilon t + \tfrac{1}{2}\right).
$
A direct computation shows that $ h_\varepsilon^-$ solves
$$
\frac{3}{2}h_\varepsilon^-(t)^2 + 2h_\varepsilon^-(t)' = 6\varepsilon^2.
$$
Fix a nonnegative smooth function $\eta$ with compact support  in
$[0,\tfrac{1}{6})$ such that $0 \le \eta \le 1$ and $\eta \equiv 1$
near $t=0$. Define
$h_\varepsilon(t) = (1-\eta(t))\, h^-_\varepsilon(t).$
Then $h_\varepsilon$ is smooth on $\left[0,\tfrac{1}{3\varepsilon}\right)$
and coincides with $h_\varepsilon^-$ on
$[\tfrac{1}{6},\tfrac{1}{3\varepsilon})$, which yields

\begin{itemize}
    \item $
\frac{3}{2}h_\varepsilon^2 + 2h_\varepsilon' = 6\varepsilon^2
\quad \text{on } \left[\tfrac{1}{6}, \tfrac{1}{3\varepsilon}\right);
$
    \item On the remaining region, since $h_\varepsilon = O(\varepsilon)$ and
$h_\varepsilon' = O(\varepsilon^2)$ uniformly on compact subsets, we obtain
$$
\left|
\frac{3}{2}h_\varepsilon^2 + 2h_\varepsilon'
\right|
\le C\varepsilon^2
$$
for some universal constant $C>0$.
    \item $h_\varepsilon' < 0$;
    \item As $\varepsilon \rightarrow 0, h_\varepsilon$ converges smoothly to the zero function on any closed interval
    \item   $
\lim _{t \rightarrow \frac{1}{3 \varepsilon}} h_\varepsilon(t)=-\infty.$

\end{itemize}

$$
\frac{3}{2}h_\varepsilon^2 + 2h_\varepsilon' = 6\varepsilon^2
\quad \text{on } \left[\tfrac{1}{6}, \tfrac{1}{3\varepsilon}\right).
$$

For each $\varepsilon>0,$ consider the prescribed mean curvature functional
$$
\mu^\varepsilon(\Omega)=\mathcal{H}_g^{2}\left(\partial \Omega\right)-\int_{M \backslash \Omega} h_{\varepsilon} \circ \phi\; \mathrm{d} \mathcal{H}_g^{3}
$$
in the class of the Caccippoli set $\mathcal{C}_\varepsilon$ in $M$ with reduced boundary $\partial^* \Omega$ such that 
$$
M\backslash \Omega \Subset \phi^{-1}([0, 1/3 \varepsilon]).
$$
It follows from Sard's Theorem that there exists a sequence $\varepsilon_k \rightarrow 0$ as $k \rightarrow \infty$ such that $1/3\varepsilon_k$ are regular values of $\phi.$ Since each function $h_{\varepsilon_k}$ satisfies the barrier condition, Proposition \ref{exist} guarantees the existence of a smooth minimizer $\Omega_k$ in $\mathcal{C}_{\varepsilon_k}$ for functional $\mu^{k}:=\mu^{\varepsilon_k}.$

Observe that $\partial \Omega_k$ is homologous to $\phi^{-1}(0),$ then  $\partial \Omega_k$ has a component, denoted by $\Sigma_k$, whose genus is at least two.  It follows from the second variation formula \eqref{second} and the Gauss equation that 
\begin{eqnarray*}
 \int_{\Sigma_k} |\nabla\varphi|^{2}\mathrm{d}\mathcal{H}_{g}^{2}
 &\geq&\int_{\Sigma_{k}}(\mbox{Ric}_{g_k}(\nu_{g_k},\nu_{g_k})+|A_{g_k}|^2+\langle \nabla h_{\varepsilon_k}\circ\phi, \nu_{g_k} \rangle )\varphi^{2} \, \mathrm{d}\mathcal{H}_{g}^{2}\\
 &\geq&\frac{1}{2}\int_{\Sigma_{k}}(-R_{\Sigma_k}+|A_{g_k}|^2-H_{g_k}^2 +R_g + 2\langle \nabla h_{\varepsilon_k}\circ\phi, \nu_{g_k} \rangle )\varphi^{2} \, \mathrm{d}\mathcal{H}_{g}^{2}\\
& \geq & \frac{1}{2}\int_{\Sigma_{k}}\left(-R_{\Sigma_k}+ \left|A_{g_k}-\frac{H_{g_k}}{2} g_k\right|^2\right)\varphi^2\, \mathrm{d}\mathcal{H}_{g}^{2}\\
& &+\frac{1}{2}\int_{\Sigma_{\varepsilon}}\left(R_g+\left(\frac{3}{2} h_{\varepsilon_k}^2+2 h_{\varepsilon_k}^{\prime}\right) \circ \phi\right)\varphi^2\, \mathrm{d}\mathcal{H}_{g}^{2} 
\end{eqnarray*}
for any non-zero  $\varphi \in C_{0}^{\infty}(\Sigma_{k}),$ where we  used 
$$
\langle \nabla h_{\varepsilon_k}\circ\phi, \nu_{g_k} \rangle\leq |dh_{\varepsilon_k}|\circ\phi\quad\mbox{ and }\quad|A_{\hat g_k}|^2\geq H^2_{\hat g_k}/2.
$$ 
Here, $g_k$ is the induced metric on $\Sigma_k,$   $A_{g_k}$ is the second fundamental form of $\Sigma_k$ in $M,$ $\nu_{g_k}$ is the outward unit normal vector field of $\Sigma_k$ and $R_{\Sigma_k}$ is the scalar curvature of $\Sigma_k.$ 

Taking $\varphi\equiv 1,$ the fact that $R_g\geq-6$ and the Gauss-Bonnet Theorem, we have
\begin{eqnarray*}
      4\pi(1-g(\Sigma_k))&\geq&\frac{1}{2}\int_{\Sigma_{\varepsilon}}(R_g-C\varepsilon_k)\, \mathrm{d}\mathcal{H}_{g}^{2}\\
      &\geq&-3|\Sigma_k|-\frac{1}{2}C\varepsilon_k|\Sigma_k|.
\end{eqnarray*}
Then we obtain
\begin{eqnarray*}
    3|\Sigma_k|&\geq&
   4\pi(g(\Sigma_k)-1)-\frac{1}{2}C\varepsilon_k|\Sigma_k|\\
   &\geq& 4\pi(\mathfrak g_0-1)-\frac{1}{2}C\varepsilon_k|\Sigma_k|.
\end{eqnarray*}
Since, by a simple comparison, we have
\begin{eqnarray*}
   |\Sigma_k|\leq\mathcal{H}_g^{2}(\partial\Omega_k)-\int_{M \backslash \Omega_k} h_k \circ \phi\; \mathrm{d} \mathcal{H}_g^{3} \leq|\partial M|,
\end{eqnarray*}
we obtain, letting $k\to\infty,$ that 
\begin{equation}\label{mean}
     |\partial M|\geq \frac{4\pi}{3}(\mathfrak g_0-1).
\end{equation}
\end{proof}

\begin{proposition}
If $\partial M$ attains equality in \eqref{ineq_thm3}, then  $(M, g)$ is covered isometrically by the cylinder $(\partial M\times[0,+\infty),g|_{\partial M}+dt^2),$ where $\partial M$ has constant Gauss curvature equal to $-3.$ Moreover, $\mathfrak g_0$ coincides with the genus of $\partial M.$
\end{proposition}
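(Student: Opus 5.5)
We plan to reuse the $\mu$-bubbles $\Omega_k$ built in the proof of Proposition~\ref{prop_help} and pass to a limit. The equality $|\partial M| = \frac{4\pi}{3}(\mathfrak g_0-1)$ turns every inequality in that proof into an asymptotic equality. The goal is to show that the limit surface is $\partial M$ itself, and that $\partial M$ is a stable minimal surface on which the Gauss equation and the stability inequality are saturated. After that, a foliation argument propagates the rigidity along the whole end.

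\textbf{Step 1: localizing the bubbles.} The components $\Sigma_k\subset\partial\Omega_k$ of genus $\ge 2$ satisfy $|\Sigma_k|\le|\partial M|$. Their mean curvature is $-h_{\varepsilon_k}\circ\phi$, which is bounded on compact sets and tends to $0$ locally uniformly. We first rule out escape to infinity. Suppose $\Sigma_k\cap\phi^{-1}([0,R])=\emptyset$ for every $R$ along a subsequence. Using bounded geometry and the curvature estimates for stable prescribed mean curvature surfaces (\cite[Theorem 3.6]{XZ}), translate the $\Sigma_k$ to base points and extract a limit in a pointed sense. This limit is a complete surface with bounded mean curvature and finite area at most $|\partial M|$. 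If it is noncompact, Theorem~\ref{Katia} (Frensel) forces infinite area, a contradiction. If it is compact, it would be a closed surface far from $\partial M$ with the same area bound, and we expect to handle this by the monotonicity-type volume lower bound from the proof of Theorem~\ref{Katia} combined with the homology constraint. This step, keeping $\Sigma_k$ inside a fixed compact region $\phi^{-1}([0,R_0])$, is the main obstacle. We would attack it first via Frensel's lower bound on the areas of intrinsic balls, in the spirit of \cite{CCE,CEM,Liu,Zhu}.

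\textbf{Step 2: the limit surface and saturation.} Once localized, $\Sigma_k\to\Sigma_\infty$ smoothly, by the argument of \cite[Proposition B.1]{MN} as in Proposition~\ref{classification}. Here $\Sigma_\infty$ is a closed minimal surface ($h_{\varepsilon_k}\to0$) which is homologically nontrivial, has genus at least $\mathfrak g_0$, and satisfies $|\Sigma_\infty|\le|\partial M|$. Passing to the limit in the chain of inequalities gives
\[
4\pi(1-g(\Sigma_\infty))\ge -3|\Sigma_\infty|\ge -3|\partial M| = -4\pi(\mathfrak g_0-1).
\]
Hence all inequalities are equalities: $g(\Sigma_\infty)=\mathfrak g_0$, $|\Sigma_\infty|=|\partial M|$, $A\equiv0$, $R_g\equiv-6$ and $\operatorname{Ric}(\nu,\nu)+|A|^2\equiv\ldots$ along $\Sigma_\infty$. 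Equality in the stability inequality with $\varphi\equiv1$ shows that the first eigenvalue of the Jacobi operator is $0$ with constant eigenfunction. This forces $R_{\Sigma_\infty}$ to be constant, i.e.\ the Gauss curvature equals $-3$.

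\textbf{Step 3: foliation and splitting.} As in Proposition~\ref{classification} b), the implicit function theorem yields a local foliation $\{\Sigma(s)\}$ by constant mean curvature surfaces near $\Sigma_\infty$. Suppose the mean curvature is positive on some leaf. Then, as in item c), a new minimization between $\Sigma_\infty$ and that leaf with a slightly perturbed weight produces a surface of genus $\ge\mathfrak g_0$ that strictly violates the area/Gauss--Bonnet bound, a contradiction. So each leaf is minimal and saturates the same equalities. It follows that the lapse is constant, $g=ds^2+g_{\Sigma}$ locally, and that the set where this product structure holds is open and closed. Since $\partial M$ is mean convex and the leaves are minimal, the maximum principle shows that the product region reaches $\partial M$ and that $\partial M$ is one of the leaves. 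Hence $\partial M$ is totally geodesic with Gauss curvature $-3$ and genus $\mathfrak g_0$. Extending the splitting forward, using completeness and the fact that the leaves cannot close up without contradicting properness of $\phi$, shows that the normal exponential map $\partial M\times[0,\infty)\to M$ is a local isometry, and in fact a Riemannian covering. This gives the claimed isometric covering by $(\partial M\times[0,\infty),g|_{\partial M}+dt^2)$ and $\mathfrak g_0=g(\partial M)$.
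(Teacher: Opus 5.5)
There is a genuine gap in Step~1, the step you yourself flag as the main obstacle, and the tools you propose cannot close it. Once you pass to a pointed limit, the ``compact'' case of your dichotomy leaves you with a closed surface of genus $\ge \mathfrak g_0$, bounded mean curvature and area at most $|\partial M|$. Nothing about such a surface is contradictory. In the rigid model $\partial M\times[0,\infty)$, the pointed limit at infinity is $\partial M\times\mathbb R$, and every slice $\partial M\times\{t\}$ is exactly such a surface. The mechanism behind Theorem~\ref{Katia} (lower area bounds for intrinsic balls) can never contradict an upper area bound for a fixed compact surface, and the homology of $M$ does not survive in a Cheeger--Gromov limit.

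More fundamentally, your Step~1 never uses the equality hypothesis. Without equality, bubbles can genuinely escape. For example, in a mean-convex warped product $dt^2+f(t)^2g_\Sigma$ over a surface of curvature $-3$, with $f$ decreasing slowly to $1$, one has $R_g\ge-6$ and the $\mu$-bubbles drift off to infinity as $\varepsilon_k\to0$. So any localization argument that ignores equality must fail. There is a secondary issue as well: near the barrier $\phi=\frac{1}{3\varepsilon_k}$, the mean curvature $-h_{\varepsilon_k}\circ\phi$ is not uniformly bounded. Escaping surfaces therefore do not even come with the bounded mean curvature that Theorem~\ref{Katia} requires.

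The missing idea is to use, at each \emph{finite} $k$, the exact profile of $h_{\varepsilon_k}$ together with equality in \eqref{ineq_thm3}.
\begin{enumerate}
\item On $\{\phi\ge\frac16\}$ one has $h_{\varepsilon_k}=h^-_{\varepsilon_k}$, hence $\frac32h_{\varepsilon_k}^2+2h_{\varepsilon_k}'=6\varepsilon_k^2>0$ exactly.
\item Suppose $\Sigma_k$ missed $\mathcal K=\phi^{-1}([0,\frac16])$. The stability and Gauss--Bonnet computation with $\varphi\equiv1$ then gives the strictly improved bound $(3-3\varepsilon_k^2)|\Sigma_k|\ge4\pi(g(\Sigma_k)-1)\ge4\pi(\mathfrak g_0-1)$.
\item Combining this with $|\Sigma_k|\le|\partial M|$ and $3|\partial M|=4\pi(\mathfrak g_0-1)$ yields $(3-3\varepsilon_k^2)|\partial M|\ge3|\partial M|$, which is absurd.
\end{enumerate}
Hence every $\Sigma_k$ meets the fixed compact set $\mathcal K$. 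You never need the bubbles to stay inside a compact region, and no pointed limits are required. The curvature estimates of \cite{XZ} then give a limit $\Sigma'\subset M$ meeting $\mathcal K$, and Theorem~\ref{Katia} rules out noncompactness of $\Sigma'$, as you intended. With this replacement, your Steps~2 and~3 follow the paper's line: saturation of the stability inequality with $\varphi\equiv1$, then a Nunes-type foliation and splitting argument \cite{N}.
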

 
\begin{proof}
    Let $\{\Sigma_k\}$ be as in the proof of Proposition \ref{prop_help}.  We claim that every $\Sigma_k$ must intersect the compact set $\mathcal K=\phi^{-1}\left(\left[0, \frac{1}{6}\right]\right).$  Indeed,  if $\mathcal{K}\cap\Sigma_k=\emptyset,$  then repeating the previous argument yields
$$
(3-3\varepsilon_k^2)|\partial M|\geq(3-3\varepsilon_k^2)|\Sigma_k|\geq
   4\pi(\mathfrak g_0-1),
$$
which contradicts the equality assumption. 

We also observe that the equality in \eqref{ineq_thm3} implies that all $\Sigma_k$ have genus $\mathfrak g_0$. Since the hypersurfaces $\Sigma_k$ are $h_{\varepsilon_k}$-minimizing boundaries with uniform bounded area, we can invoke the curvature estimates in \cite[Theorem 3.6]{XZ}. Thus, after passing to a subsequence, $ \Sigma_k $ converges smoothly in a locally graphical sense with multiplicity one to an area-minimizing boundary $\Sigma'.$ 

We claim that $\Sigma'$ is compact. Indeed,  if $\Sigma'$ were noncompact, then by Theorem \ref{Katia}  its area would be infinite, contradicting the inequality  $|\Sigma'|\leq|\partial M|$. Moreover, since $\Sigma_k$ is connected, $\Sigma'$ is a surface of genus $\mathfrak g_0$  and $\Sigma_k$  becomes a graph over $\Sigma,$ for $k$ sufficiently large. Moreover, by stability
$$
\frac{4\pi}{3}(\mathfrak g_0-1)=|\partial M|\geq|\Sigma'|\geq\frac{4\pi}{3}(\mathfrak g_0-1).
$$ 
Therefore, $\Sigma'$ is area-minimizing and attains the equality case.  Equality in the stability inequality implies that the limit surface is
totally geodesic and satisfies $\operatorname{Ric}(\nu,\nu)=0$ on $\Sigma'$. We can argue as in Nunes \cite[Theorem 5]{N}, where standard splitting arguments then show that a neighborhood of the surface splits
isometrically.
  \end{proof}

\end{document}